\newcommand{\Q}{\mathbb{Q}}
\def\tbcaption{\def\@captype{table}\caption}
\newtheorem{theorem}{Theorem}[section]
\newtheorem{lemma}[theorem]{Lemma}
\newtheorem{cor}[theorem]{Corollary}
\theoremstyle{definition}
\theoremstyle{remark}
\newtheorem{remark}[theorem]{Remark}
\numberwithin{equation}{section}
\newcommand{\beq}[1]{\begin{equation}\label{#1}}
\newcommand{\eeq}{\end{equation}}
\newcommand{\bZ}{\ensuremath{\mathbb{Z}}}
\newcommand{\bQ}{\ensuremath{\mathbb{Q}}}
\newcommand{\bL}{\ensuremath{\mathbb{L}}}
\newcommand{\abs}[1]{\left\vert#1\right\vert}
\begin{document}
%\doublespacing
%%%%%%%%%%%%%%%%%%%%%%%%%%%%%%%%%%%%%%%%%%%%%%%%%%%%%%%%%%%
\title[$F_n$ or $L_n$ numbers as products of three repdgits in base $g$]{\small Fibonacci and Lucas numbers as products of three repdgits in base $g$}

\author[K. N. Ad\'edji, A. Filipin and A. Togb\'e]{Kou\`essi Norbert Ad\'edji, Alan Filipin and Alain Togb\'e}

\date{\today}
\maketitle

%%%%%%%%%%%%%%%%%%%%%%%%%%%%%%%%%%%%%%%%%%%%%%%%%%%%%%%%%%%
\begin{abstract}
Recall that repdigit in base $g$ is a positive integer that has only one digit in its base $g$ expansion, i.e. a number of the form $a(g^m-1)/(g-1)$, for some positive integers $m\geq 1$, $g\geq 2$ and $1\leq a\leq g-1$.  In the present study we investigate all Fibonacci or Lucas numbers  which are expressed as products of three repdigits in base $g$. As illustration, we consider the case $g=10$ where we show that the numbers $144$ and $18$ are the largest Fibonacci and Lucas numbers which can be expressible as products of three repdigits respectively. All this can be done using linear forms in logarithms of algebraic numbers.
\end{abstract}
%%%%%%%%%%%%%%%%%%%%%%%%%%%%%%%%%%%%%%%%%%%%%%%%%%%%%%%%%
\maketitle

%\subjclass[2010]{11B39, 11J86.}
\noindent{\it 2010 {Mathematics Subject Classification:}} 11B39, 11J86, 11D61, 11D72, 11Y50.

\noindent{\it Keywords}: Fibonacci numbers, Lucas numbers, Mersenne numbers, Diophantine equations, $g$ repdigit, Reduction method.

%\maketitle

%%%%%%%%%%%%%%%%%%%%%%%%%%%%%%%%%%%%%%%%%%%%%%%%%%%%%%%%%%%
\section{Introduction}\label{intr}
%%%%%%%%%%%%%%%%%%%%%%%%%%%%%%%%%%%%%%%%%%%%%%%%%%%%%%%%%%%

Let $\{F_n\}_{n\ge 0}$ be  the Fibonacci sequence given by $F_{n+2}=F_{n+1} + F_n,$ with initial values $F_0=0$ and $F_1=1$ and let $\{L_n\}_{n\ge 0}$ be the Lucas sequence defined by $L_{n+2}=L_{n+1} + L_n,$ where $L_0 =2$ and $L_1=1.$ If 
\[
(\alpha, \beta) =\left(\dfrac{1+\sqrt{5}}{2},\dfrac{1-\sqrt{5}}{2} \right)
\]
is the pair of roots of the characteristic equation $x^2 -x-1=0$ of both the Fibonacci and Lucas numbers, then  Binet's formulas for their general terms are
\begin{align}\label{FLn}
F_n=\dfrac{\alpha^n-\beta^n}{\alpha-\beta}\quad \text{and}\quad L_n=\alpha^n+\beta^n,\quad \text{for}\quad n\ge 0.
\end{align}
It can be seen that $1<\alpha <2,\;-1<\beta <0$ and $\alpha\beta=-1.$ The following relations between $n$-th Fibonacci number $F_n,$ $n$-th Lucas number $L_n$ and $\alpha$ are well known
\begin{align}\label{F_n-L_n}
\alpha^{n-2} \le F_n\le \alpha^{n-1}\quad\text{and}\quad \alpha^{n-1} \le L_n\le 2\alpha^{n},\quad\text{for}\quad n\ge 0.
\end{align}
Now, we are going to introduce the second concept of this study related to repdigits. Let $g\ge 2$ be an integer. A positive integer $N$ is called a repdigit in base $g$ or simply a $g$-repdigit if all of the digits in its base $g$ expansion are equal. Indeed, $N$ is of the form
\[
a\left(\dfrac{g^m-1}{g-1}\right),\quad\text{for}\; m\ge1,\;  a\in \{1,2,\ldots,g-1\}.
\]
Taking $g=10,$ the positive integer $N$ is simply called repdigit. The study of Diophantine equations involving linear recurrent sequences and repdigits has been considered in recent years by many theorists. First, Luca showed in \cite{Luca:2000} that the number $55$ is the largest repdigit in the Fibonacci sequence. After this, many authors have worked on other similar problems (see \cite{ALT:2021}, \cite{Ddamulira:2020}, \cite{EKS:2019}, \cite{EKS:2021} and references therein). In \cite{EK:2019}, the authors solved the problem of ﬁnding the Fibonacci or Lucas numbers as products of two repdigits. In \cite{EKS2:2021}, the authors tackled the problem of ﬁnding Fibonacci or Lucas numbers which are products of two repdigits in base $b$. In \cite{Adedji-Filipin-Togbe:2020-rim}, we found all padovan and perrin numbers which are products of two repdigits in base $b$ with $2\le b\le 10$. This study deals with Fibonacci and Lucas numbers which are products of three repdigits in base $g$. In other words, we study the Diophantine equations
\begin{align}\label{main_equation1}  
F_k=d_1 \dfrac{g^\ell-1}{g-1}\cdot d_2\dfrac{g^m-1}{g-1}\cdot d_3\dfrac{g^n-1}{g-1},
\end{align}
and
\begin{align}\label{main_equation2} 
L_k=d_1 \dfrac{g^\ell-1}{g-1}\cdot d_2\dfrac{g^m-1}{g-1}\cdot d_3\dfrac{g^n-1}{g-1},
\end{align}
where $d_1, d_2, d_3, k, \ell, m$ and $n$ are positive integers such that 

$1 \le d_1,d_2,d_3 \le g-1$ and $g\ge 2$ with $n\ge 1$, $\ell\le m\le n$.
The novelty to the present work lies in effectiveness, in the sense that $k$ and $n$ can be effectively bounded in terms of $g.$ This may be obtained using Baker's method. There are several different estimates of Baker-type lower bounds for linear forms in logarithms. In this study, we use the most common Baker type method due to Matveev \cite{Matveev:2000} or \cite[Theorem~9.4]{BMS:2006}. Thus, our main results are as follows.

% The contribution to the present work lies in the effectivity, in the sense that, $k$ and $n$ can be effectively bounded in terms of $g.$ This can be achieved using Baker's method. There exist several different estimates of Baker-type lower bounds for linear forms in logarithms. In this study we use the most common Baker-type method due to Matveev \cite{Matveev:2000} or \cite[Theorem~9.4]{BMS:2006}. So, our main results are the following.
\begin{theorem}\label{main_result1}
Let $g\ge 2$ be an integer. Then the Diophantine equation \eqref{main_equation1}  has only finitely many solutions in integers  $k, d_1, d_2, d_3, g, \ell, m, n$ such that $n\ge 1.$ Namely, we have 
%\begin{equation}\label{ine1}
$$
\ell \le m\le n<1.08\times 10^{48}\log^9 g\quad\text{and}\quad k<1.08\times 10^{49}\log^{10}g.
$$
%\end{equation}
\end{theorem}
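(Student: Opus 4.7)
My plan is to derive three successive linear forms in logarithms by peeling off the dominant terms from \eqref{main_equation1}, apply Matveev's bound \cite{Matveev:2000} (equivalently \cite[Theorem~9.4]{BMS:2006}) to each, and then chain the resulting inequalities. I first establish a coarse size relation between $k$ and $n$: from the upper estimate in \eqref{F_n-L_n} together with
\[
F_k \;=\; \frac{d_1 d_2 d_3\,(g^\ell-1)(g^m-1)(g^n-1)}{(g-1)^3} \;\le\; g^{\ell+m+n},
\]
and the lower bound $\alpha^{k-2}\le F_k$, I obtain $(k-2)\log\alpha \le (\ell+m+n)\log g$. Since $\ell\le m\le n$, this yields $k<C_0\,n\log g$ for an absolute constant $C_0$, and this estimate controls the parameter $B$ in every subsequent application of Matveev.

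Next I set up the first linear form. Using Binet's formula and the expansion
\[
(g^\ell-1)(g^m-1)(g^n-1)=g^{\ell+m+n}-g^{m+n}-g^{\ell+n}-g^{\ell+m}+g^n+g^m+g^\ell-1,
\]
I isolate the dominant term $g^{\ell+m+n}$ to write
\[
\left|\frac{\alpha^k}{\sqrt 5}-\frac{d_1 d_2 d_3}{(g-1)^3}\,g^{\ell+m+n}\right|<c_1\,g^{m+n},
\]
where the right-hand side absorbs $\beta^k/\sqrt 5$ together with the remaining seven lower-order monomials. Dividing by the second term gives
\[
|\Lambda_1|<c_2\,g^{-\ell+3},\qquad \Lambda_1:=\frac{\sqrt{5}\,(g-1)^3}{d_1 d_2 d_3}\,\alpha^k\,g^{-(\ell+m+n)}-1.
\]
Non-vanishing of $\Lambda_1$ follows from $\sqrt{5}\,\alpha^k=\tfrac{1}{2}(5F_k+L_k\sqrt{5})\notin\Q$ (since $L_k\ne 0$). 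Applying Matveev with $t=3$, $D=2$, and the algebraic numbers $\sqrt{5}(g-1)^3/(d_1 d_2 d_3)$, $\alpha$, $g$ (whose relevant quantities $A_i$ in Matveev's theorem are of sizes $O(\log g)$, $\log\alpha$, $2\log g$) together with $B=k$, I arrive at
\[
\log|\Lambda_1|>-K_1\log^2 g\,(1+\log k),
\]
and comparing with the upper estimate yields $\ell<K_1'\log g\,(1+\log k)$.

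The same peeling strategy, now subtracting the contribution of $g^{m+n}$ and subsequently also $g^{\ell+n}, g^{\ell+m}$, produces
\[
\Lambda_2:=\frac{\sqrt{5}(g-1)^3}{d_1 d_2 d_3\,(g^\ell-1)}\,\alpha^k\,g^{-(m+n)}-1,\qquad |\Lambda_2|<c_3\,g^{-m+3},
\]
\[
\Lambda_3:=\frac{\sqrt{5}(g-1)^3}{d_1 d_2 d_3\,(g^\ell-1)(g^m-1)}\,\alpha^k\,g^{-n}-1,\qquad |\Lambda_3|<c_4\,g^{-n+3}.
\]
At each stage the first algebraic number inherits the additional factor $g^\ell-1$, or $(g^\ell-1)(g^m-1)$, contributing at most $\ell\log g$ or $(\ell+m)\log g$ to its absolute logarithmic height. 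Matveev applied to $\Lambda_2$ then gives $m<K_2\,\ell\log g\,(1+\log k)$, and applied to $\Lambda_3$ yields $n<K_3(\ell+m)\log g\,(1+\log k)$. Chaining the three bounds produces an inequality of the shape $n<K\log^{a} g\,(1+\log k)^{b}$ with explicit absolute constants $K,a,b$.

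To finish, I substitute the crude estimate $\log k<\log(C_0 n\log g)$ and apply the standard iteration lemma (``if $x<A(\log x)^b$ then $x<2A(\log A)^b$'') to isolate $n$, which produces the stated bound $n<1.08\times 10^{48}\log^9 g$; Step~1 then converts this into $k<1.08\times 10^{49}\log^{10} g$. The non-vanishing of each $\Lambda_j$ reduces to the irrationality of $\sqrt 5\,\alpha^k$ and is short at every stage, so the principal obstacle is not conceptual but arithmetical: one must carry the explicit Matveev constants through three successive applications while bookkeeping the accumulating heights, and then solve the resulting implicit inequality to extract the numerical values $1.08\times 10^{48}$ and $1.08\times 10^{49}$.
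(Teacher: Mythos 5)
Your proposal follows essentially the same route as the paper: the same coarse relation $k=O(n\log g)$ from \eqref{F_n-L_n}, the same three linear forms obtained by successively peeling off $g^{\ell+m+n}$, $g^{m+n}$ and $g^{n}$ (with the factors $g^\ell-1$ and $(g^\ell-1)(g^m-1)$ migrating into the first algebraic number and inflating its height exactly as you describe), three applications of Matveev with $d_{\bL}=2$, the chaining $\ell\to m\to n$, and finally the Guzm\'an--Luca iteration lemma to extract $n=O(\log^9 g)$. Apart from cosmetic slips (the $\sqrt 5$ belongs in the denominator of each $\Lambda_j$, and your cruder $g^{3-\ell}$-type upper bounds only perturb the constants), the argument is the paper's proof.
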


\begin{theorem}\label{main_result2}
Let $g\ge 2$ be an integer. Then the Diophantine equation \eqref{main_equation2}  has only finitely many solutions in integers  $k, d_1, d_2, d_3, g, \ell, m, n$ such that $n\ge 1.$ Namely, we have 
%\begin{equation}\label{ine2}
$$
\ell \le m\le n<7.73\times 10^{47}\log^9 g\quad\text{and}\quad k<7.73\times 10^{48}\log^{10}g.
$$
%\end{equation}
\end{theorem}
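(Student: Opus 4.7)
First I would mirror the proof of Theorem \ref{main_result1}, but using Binet's formula for the Lucas numbers, $L_k = \alpha^k + \beta^k$. The starting move is a coarse size relation between $k$ and $n$: from \eqref{F_n-L_n}, $\alpha^{k-1} \le L_k$, while the right-hand side of \eqref{main_equation2} is at most $g^{\ell+m+n} \le g^{3n}$. Together these give $(k-1)\log\alpha \le 3n\log g$, so up to an explicit constant $c_0$ one has $k \le c_0\, n\log g$, which lets $k$ serve as the height parameter $B$ in Matveev's theorem throughout.

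The heart of the argument is a cascade of three linear forms in three logarithms, peeling off one factor of the product at each stage. Expanding the right-hand side of \eqref{main_equation2} and separating out the dominant monomial gives, after division by $\alpha^k$,
\begin{equation*}
\left| 1 - \frac{(g-1)^3}{d_1 d_2 d_3}\,\alpha^{-k}g^{\ell+m+n}\right| \ll g^{-\ell},
\end{equation*}
so that
\begin{equation*}
\Lambda_1 := (\ell+m+n)\log g - k\log\alpha - \log\frac{d_1d_2d_3}{(g-1)^3}
\end{equation*}
is very small. I would apply Matveev's theorem to the three algebraic numbers $\alpha$, $g$, $d_1d_2d_3/(g-1)^3$, which live in $\Q(\sqrt 5)$ and have logarithmic heights at most $\tfrac12\log\alpha$, $\log g$ and $3\log(g-1)$ respectively; comparing the resulting lower bound on $|\Lambda_1|$ with the upper bound above yields $\ell \ll \log g\cdot\log k$ with an explicit constant. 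Grouping $g^\ell-1$ into the ``constant'' entry then produces a form $\Lambda_2$ with $|\Lambda_2| \ll g^{-m}$ whose third entry has height $\ll \ell\log g$, so Matveev yields $m \ll \ell \log g\cdot\log k$. A final iteration that absorbs $(g^\ell-1)(g^m-1)$ into the constant entry produces $\Lambda_3$ with $|\Lambda_3| \ll g^{-n}$ and, via Matveev, $n \ll (\ell+m)\log g\cdot\log k$. Chaining these three inequalities and substituting into $k \le c_0 n\log g$ gives an inequality of the form $k \le C\log^a g\cdot(\log k)^3$; the standard implication ``if $x \le A(\log x)^s$ then $x \le 2^s A(\log A)^s$'' then extracts an explicit polynomial bound in $\log g$, from which the stated bounds on $n$ and $k$ follow.

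The main obstacle will be constant management. Each Matveev application contributes a huge universal constant of order $1.4\cdot 30^6\cdot 3^{4.5}\cdot D^2(1+\log D)$ (with $D=2$), and the heights of the rational entries compound across the three iterations, so the chain of estimates must be tracked quantitatively to recover the announced numerical factors and exponents $\log^9 g$ and $\log^{10} g$. Moreover, for small $\ell$, $m$, or $n$ the upper bounds on the $\Lambda_i$ above do not improve on the trivial estimate, so those few residual regimes would have to be dispatched directly rather than through Matveev.
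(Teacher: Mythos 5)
Your proposal follows the paper's proof essentially verbatim: the same coarse relation $k<10n\log g$ (Lemma~\ref{k1}), the same cascade of three linear forms in three logarithms peeling off one repdigit factor at a time with three applications of Matveev's theorem, and the same final bootstrap via Lemma~\ref{lemma_G-Luca}, which is exactly your ``standard implication''. The only cosmetic differences are that the paper phrases the intermediate bounds in terms of $\log n$ (via $B=10n\log g$) rather than $\log k$, and applies the bootstrap to $n$ before deducing the bound on $k$.
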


The organization of this paper is as follows. In Section~\ref{prelim} we will cite the results that we will use in Sections~\ref{Sec_Fibo} and \ref{Sec_Lucas}, where our fundamental results of this paper will be fully proven. Also, we devote Section~\ref{The end} to some concluding remarks.

%%%%%%%%%%%%%%%%%%%%%%%%%%%%%%%%%%%%%%%%%%%%%%%%%%%%%%%%%%%
\section{Auxiliary results}\label{prelim}
%%%%%%%%%%%%%%%%%%%%%%%%%%%%%%%%%%%%%%%%%%%%%%%%%%%%%%%%%%%

We begin this section with a few reminders about logarithmic height of an algebraic number. Let $\eta$ be an algebraic number of degree $d,$ let $a_0 >0$ be the leading coefficient of its minimal polynomial over $\bZ$ and let $\eta=\eta^{(1)},\ldots,\eta^{(d)}$ denote its conjugates. The quantity defined by  
\[
 h(\eta)= \frac{1}{d}\left(\log |a_0|+\sum_{j=1}^{d}\log\max\left(1,\left|\eta^{(j)} \right| \right) \right)
\]
is called the logarithmic height of $\eta.$ Some properties of height are as follows. For $\eta_1, \eta_2$ algebraic numbers and $m\in \bZ,$ we have
\begin{align*}
 h(\eta_1 \pm \eta_2) &\leq h(\eta_1)+ h(\eta_2) +\log2,\\
h(\eta_1\eta_2^{\pm}) &\leq h(\eta_1) + h(\eta_2),\\
h(\eta_1^m)&=|m|h(\eta_1).
\end{align*}
If $\eta=\dfrac{p}{q}\in\bQ$ is a rational number in reduced form with $q>0,$ then the above definition reduces to $h(\eta)=\log(\max\{|p|,q\}).$ We can now present the famous Matveev result used in this study. Thus, let $\bL$ be a real number field of degree $d_{\bL}$, $\eta_1,\ldots,\eta_s \in \bL$ and $b_1,\ldots,b_s \in \bZ  \setminus\{0\}.$ Let $B\ge \max\{|b_1|,\ldots,|b_s|\}$ and
\[
\Lambda=\eta_1^{b_1}\cdots\eta_s^{b_s}-1.
\]
Let $A_1,\ldots,A_s$ be real numbers with 
\[
A_i\ge \max\{d_{\bL} h(\eta_i), |\log\eta_i|, 0.16\},\quad i=1,2,\ldots,s.
\]
With the above notations, Matveev proved the following result.
\begin{theorem}\label{Matveev}
Assume that $\Lambda\neq 0.$ Then
\[
\log|\Lambda|>-1.4\cdot30^{s+3}\cdot s^{4.5}\cdot d_{\bL}^2 \cdot(1+\log d_{\bL})\cdot(1 +\log B)\cdot A_1\cdots A_s.
\]
\end{theorem}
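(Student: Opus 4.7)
The plan is to establish Matveev's explicit lower bound for the nonzero linear form $\Lambda=\eta_1^{b_1}\cdots\eta_s^{b_s}-1$ by pushing the classical Baker--Feldman--W\"ustholz transcendence machinery through several refinements, tracking every numerical constant. I would work not directly in $\bL$ but in a suitable Galois extension $\bK/\bQ$ containing $\bL$ and enough roots of unity, so that I can perform a Kummer-style descent: write each $\eta_i$ as $\zeta_i\xi_i$ with $\zeta_i$ a root of unity and $\xi_i$ chosen so that the $\xi_i$ are as close to multiplicatively independent as possible. It is precisely this descent that lets Matveev reduce the dependence on $s$ from the $s^{2s+\cdots}$ type constants of W\"ustholz down to the $30^{s+3}s^{4.5}$ appearing in the statement.

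Next I would set up the auxiliary interpolation construction. Choosing integer parameters $T_0,\dots,T_s$ and $L_0,\dots,L_s$ (to be optimized later), I would invoke Siegel's lemma to produce rational integer coefficients $p(\lambda_0,\dots,\lambda_s)$, not all zero and of controlled logarithmic size, such that the exponential polynomial
\[
\Phi(z)=\sum_{\lambda_0=0}^{L_0}\cdots\sum_{\lambda_s=0}^{L_s}p(\lambda_0,\dots,\lambda_s)\,z^{\lambda_0}\,\eta_1^{\lambda_1 z b_1/b_s}\cdots\eta_s^{\lambda_s z}
\]
vanishes to prescribed orders at the integer points $z=0,1,\dots,T_0$. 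The heights $A_i$ and degree $d_{\bL}$ enter here through the size estimates on $\Phi$ and on the conjugates of its values, and this is where the $A_1\cdots A_s$ factor in the conclusion originates.

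The heart of the argument is then an extrapolation. Assuming for contradiction that $|\Lambda|$ is smaller than the claimed bound, the identity $\eta_1^{b_1}\cdots\eta_s^{b_s}=1+\Lambda$ forces the values of $\Phi$ on a much larger integer range to be analytically tiny. Applying the Schwarz--maximum modulus estimate, together with bounds on $|\Phi|$ on a disk of appropriate radius, I would propagate the vanishing and conclude that $\Phi$ has far more zeros, with multiplicity, than its construction allows. A zero estimate for exponential polynomials --- the Philippon--Waldschmidt multiplicity lemma or Nesterenko's combinatorial variant --- then provides a genuine algebraic contradiction, unless the assumed lower bound on $|\Lambda|$ holds.

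The main obstacle, and indeed the novelty of Matveev over earlier versions of Baker's theorem, is the \emph{numerical} optimization. Each of the four steps (Kummer descent, Siegel lemma construction, analytic extrapolation, zero estimate) introduces a constant depending on $s$, $d_{\bL}$, and $B$, and the parameters $T_j,L_j$ must be balanced so that the final product matches $1.4\cdot 30^{s+3}\cdot s^{4.5}\cdot d_{\bL}^2(1+\log d_{\bL})(1+\log B)A_1\cdots A_s$. Verifying that the admissible choices of parameters interlock consistently, and that no step secretly worsens the constant, is the hard bookkeeping part; the conceptual structure (auxiliary function + extrapolation + zero estimate) is standard transcendence theory, but sharpening it to Matveev's explicit shape is where the real work lies.
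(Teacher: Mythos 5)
This statement is not proved in the paper at all: it is Matveev's theorem, quoted verbatim from \cite{Matveev:2000} (see also \cite[Theorem~9.4]{BMS:2006}) and used as a black box. So there is no ``paper proof'' to compare against; the only fair question is whether your proposal actually constitutes a proof of the cited result, and it does not. What you have written is an accurate high-level roadmap of the architecture of Matveev's argument --- the Kummer-type descent to an almost multiplicatively independent system (which is indeed the source of the improved dependence on $s$), the Siegel-lemma construction of an auxiliary exponential polynomial, the Schwarz-lemma extrapolation, and a multiplicity/zero estimate --- but every single step is described rather than executed. No parameters $T_j$, $L_j$ are chosen, no height or degree bounds are actually derived, the zero estimate is invoked by name without a statement, and the passage from ``the constants must interlock'' to the specific value $1.4\cdot 30^{s+3}\cdot s^{4.5}\cdot d_{\bL}^2(1+\log d_{\bL})(1+\log B)A_1\cdots A_s$ is asserted, not verified. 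Since the entire content of the theorem is the explicit numerical constant, deferring the ``bookkeeping'' is deferring the whole proof.

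Concretely, the gap is this: you have reproduced the table of contents of a roughly fifty-page transcendence argument and labelled the hard part as hard. That is a reasonable summary of why the theorem is true, and it would be a fine paragraph of context, but it cannot be checked, and nothing in it could be turned into a verification of the stated inequality without essentially writing out Matveev's two papers. In the context of the present article the correct ``proof'' of this statement is a citation, which is exactly what the authors give.
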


The following lemma will also be used in order to prove our subsequent results.
\begin{lemma}[Lemma~7 of \cite{SGL}]\label{lemma_G-Luca}
If $l\ge 1,\; H>\left(4l^2\right)^l$ and $H>L/(\log L)^l,$ then 
$$ 
L<2^l H(\log H)^l.
$$
\end{lemma}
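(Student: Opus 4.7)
The plan is to pass to logarithms and reduce the lemma to a clean inequality between $u := \log L$ and $v := \log H$. The hypothesis $H > L/(\log L)^l$ rewrites as $L < H(\log L)^l$, whose logarithm reads
\[
u < v + l \log u,
\]
while the desired conclusion $L < 2^l H(\log H)^l$ is equivalent, after taking logarithms, to
\[
u < l\log 2 + v + l \log v.
\]

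If I can show $u \le 2v$, the conclusion follows immediately: then $\log u \le \log 2 + \log v$, and substituting into $u < v + l\log u$ gives $u < v + l\log 2 + l\log v$, from which exponentiating recovers the desired bound. So the real task is to establish $u \le 2v$.

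I would do this by contradiction: assume $u > 2v$. Then $v < u/2$, which combined with $u - l\log u < v$ forces $u < 2l\log u$. Since $t \mapsto t/\log t$ is strictly increasing on $(e,\infty)$ and at $t = 4l^2$ takes the value $2l^2/\log(2l)$, which is strictly larger than $2l$ for every integer $l \ge 1$ (one verifies that $l - \log(2l)$ is positive at $l=1$ and non-decreasing thereafter), I deduce $u < 4l^2$. Consequently $\log u < \log(4l^2) = 2\log(2l)$, and hence $u < 2l\log u < 4l\log(2l)$. On the other hand, the remaining hypothesis $H > (4l^2)^l$ yields $v > l\log(4l^2) = 2l\log(2l)$, so $2v > 4l\log(2l) > u$, contradicting $u > 2v$.

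The one delicate point is that the threshold $(4l^2)^l$ in the hypothesis is precisely calibrated: it gives $v > 2l\log(2l)$, while the case analysis forces $u < 4l\log(2l)$, and the factor of two between these is exactly what contradicts $u > 2v$. Everything else is routine bookkeeping in logarithmic variables, and no machinery beyond monotonicity of $t/\log t$ is required.
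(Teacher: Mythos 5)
Your argument is correct. Note, however, that the paper contains no proof of this statement to compare against: it is quoted verbatim as Lemma~7 of \cite{SGL} and used as a black box, so your computation supplies a self-contained proof of the cited result rather than an alternative to anything in the text. The core of your argument — reducing to $u<v+l\log u$ with $u=\log L$, $v=\log H$, and splitting on whether $u\le 2v$ — is sound, and the numerical calibration you highlight ($v>2l\log(2l)$ from $H>(4l^2)^l$ versus $u<4l\log(2l)$ from the contradiction hypothesis) checks out, since $l>\log(2l)$ for all $l\ge 1$. Two points you use implicitly but which do hold and deserve a line: in the contradiction case you apply monotonicity of $t/\log t$ on $(e,\infty)$ to $u$, which requires $u>e$; this follows from $u>2v>2\log 4>e$, as $v>l\log(4l^2)\ge\log 4$. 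And the degenerate situation $L\le e$ (where $\log u\le 0$ or $u$ is undefined) causes no trouble, because there the conclusion is immediate from $2^lH(\log H)^l>e$, or alternatively because $\log u\le 0$ forces $u<v\le 2v$ and you land in the easy case.
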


The upper bounds of the variables of equations \eqref{main_equation1} and \eqref{main_equation2} obtained after the application of Theorem~\ref{Matveev} are very large for a very fast search for solutions by a computer program. To overcome this situation, a reduction of the upper bounds is necessary. For this reduction purpose, we present a variant of the reduction method of Baker and Davenport due to Dujella and Peth\H o \cite{Dujella-Peto}. For a real number $x$, we write $||x||:= \min\{|x - n|: n\in \bZ\}$ for the distance from $x$ to the nearest integer.
\begin{lemma}[Lemma~5a of \cite{Dujella-Peto}]\label{Dujella-Peto}
	Let $M$  be a positive integer, let $p/q$ be a convergent of the continued fraction of the irrational $\tau$ such that $q> 6M$, and let $A,B,\mu$ be some real numbers with $A> 0$ and $B> 1$. Let
	$$
	\varepsilon=||\mu q||-M\cdot||\tau q||,
	$$
	where $||\cdot||$ denotes the distance from the nearest integer. If    $\varepsilon>0$, then there is no solution of the inequality
	$$
	0 <\abs{m\tau-n+\mu}<AB^{-w}
	$$
	in positive integers $m,n$ and $k$ with 
	$$
	m\leq M \quad \text{and} \quad w\geq\dfrac{\log(Aq/\varepsilon)}{\log B}.
	$$ 
\end{lemma}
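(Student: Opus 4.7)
The plan is to argue by contradiction. Suppose $(m,n,w)$ is a positive-integer triple satisfying $m \le M$, $w \ge \log(Aq/\varepsilon)/\log B$, and $0 < |m\tau - n + \mu| < AB^{-w}$; the aim is to derive a contradiction with the hypothesis $\varepsilon > 0$.

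The first step is to multiply the given inequality by $q$ and use the algebraic identity
\[
q(m\tau - n + \mu) = m(q\tau - p) + (mp - nq) + \mu q.
\]
Since $mp - nq \in \bZ$, its presence does not affect the distance to the nearest integer, so
\[
\bigl\| m(q\tau - p) + \mu q \bigr\| \le \bigl|q(m\tau - n + \mu)\bigr| < qAB^{-w}.
\]

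Next, I would invoke the defining property of a convergent, namely that $p$ is the nearest integer to $q\tau$, whence $|q\tau - p| = \|q\tau\|$, together with the classical bound $\|q\tau\| < 1/q$. Combined with the hypothesis $q > 6M$, this yields $M\|q\tau\| < 1/6 < 1/2$, so $\|m(q\tau-p)\| = m\|q\tau\|$, and the reverse triangle inequality for $\|\cdot\|$ gives
\[
\bigl\| m(q\tau - p) + \mu q \bigr\| \ge \|\mu q\| - m\|q\tau\| \ge \|\mu q\| - M\|q\tau\| = \varepsilon.
\]

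Combining the two displayed bounds produces $\varepsilon < qAB^{-w}$, i.e.\ $B^w < qA/\varepsilon$, which contradicts $w \ge \log(Aq/\varepsilon)/\log B$. The only delicate point, and the main obstacle to a purely mechanical argument, is the justification that the reverse triangle inequality applies in this form: one needs $m\|q\tau\| \le 1/2$, and this is precisely the technical role played by the hypothesis $q > 6M$, via the standard bound $\|q\tau\| < 1/q$ for denominators of convergents.
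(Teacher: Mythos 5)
The paper does not prove this lemma; it is quoted verbatim (as Lemma~5a of Dujella and Peth\H o) and used as a black box, so there is no in-paper argument to compare against. Your proof is the standard one from the cited source and is correct: multiplying by $q$, absorbing the integer $mp-nq$ into the distance to the nearest integer, and applying the reverse triangle inequality gives $\varepsilon\le \Vert m(q\tau-p)+\mu q\Vert<qAB^{-w}$, which contradicts $w\ge \log(Aq/\varepsilon)/\log B$. One refinement to the point you flag as delicate: the inequality $\Vert x+y\Vert\ge \Vert x\Vert-|y|$ holds for all real $y$ with no smallness assumption, since $\Vert x\Vert\le \Vert x+y\Vert+\Vert y\Vert\le \Vert x+y\Vert+|y|$; so the step $\Vert m(q\tau-p)+\mu q\Vert\ge \Vert\mu q\Vert-M\Vert q\tau\Vert=\varepsilon$ needs only $m\le M$ and $|q\tau-p|=\Vert q\tau\Vert$, not $m\Vert q\tau\Vert\le 1/2$. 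The hypothesis $q>6M$ (via $\Vert q\tau\Vert<1/q$) is not logically required for this direction of the argument; its role in the original lemma is to keep $M\Vert q\tau\Vert<1/6$ so that $\varepsilon>0$ is likely to hold for a suitable convergent, and to support the companion statement treating the case $\varepsilon\le 0$.
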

Note that Lemma~\ref{Dujella-Peto} cannot be applied when $\mu = 0$ (since then $\varepsilon< 0$) or when $\mu$ is a multiple of $\tau$. For this
case, we use the following well known technical result from Diophantine approximation, known as Legendre's criterion. 
% This comes from the theory of continued fractions (see \cite{EGL:2019}).

\begin{lemma}[Legendre \cite{Cohen:1993}]\label{Lemma-Legendre}
Let $\kappa$ be a real number and $x, y$ integers such that $$  
\left|\kappa - \frac{x}{y} \right|<\frac{1}{2y^2}.
$$
Then $x/y = p_k /q_k$ is a convergent of $\kappa$. Further, let $M$ and $N$ be a nonnegative integers such that $q_N>M.$ Then putting $a(M):=\max\{a_i: i=0,1,2,\ldots,N\},$ the inequality
 $$
 \left|\kappa - \frac{x}{y} \right|\geq\frac{1}{(a(M)+2)y^2},
 $$
holds for all pairs $(x, y)$ of positive integers with $0<y<M.$
\end{lemma}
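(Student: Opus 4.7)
The plan is to derive both assertions from classical continued-fraction theory. Write $\kappa = [a_0;a_1,a_2,\ldots]$, with convergents $p_k/q_k$ generated by the standard recurrences $p_k = a_k p_{k-1}+p_{k-2}$, $q_k = a_k q_{k-1}+q_{k-2}$, and recall the identity
\[
\kappa=\frac{p_k\xi_{k+1}+p_{k-1}}{q_k\xi_{k+1}+q_{k-1}},\qquad \xi_{k+1}=[a_{k+1};a_{k+2},\ldots],
\]
which yields $|\kappa-p_k/q_k|=1/\bigl(q_k(q_k\xi_{k+1}+q_{k-1})\bigr)$.

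For the first assertion (Legendre's theorem proper), I would assume without loss of generality that $\gcd(x,y)=1$, choose the unique index $n$ with $q_n\le y<q_{n+1}$, and suppose for contradiction that $x/y\neq p_n/q_n$. Then $|xq_n-yp_n|\ge 1$ forces $|x/y-p_n/q_n|\ge 1/(yq_n)$, while the triangle inequality combined with the hypothesis $|\kappa-x/y|<1/(2y^2)$ and the classical estimate $|\kappa-p_n/q_n|\le 1/(q_nq_{n+1})$ gives $|x/y-p_n/q_n|<1/(2y^2)+1/(q_nq_{n+1})$. A short manipulation using $y\ge q_n$ and $y<q_{n+1}$ produces a contradiction, so $x/y=p_n/q_n$ must be a convergent.

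For the second assertion, the identity above combined with $\xi_{k+1}<a_{k+1}+1$ and $q_{k-1}\le q_k$ gives
\[
\left|\kappa-\frac{p_k}{q_k}\right|=\frac{1}{q_k(q_k\xi_{k+1}+q_{k-1})}>\frac{1}{(a_{k+1}+2)q_k^2}.
\]
Fix now $(x,y)$ with $0<y<M$. If $|\kappa-x/y|\ge 1/(2y^2)$ the conclusion is immediate since $a(M)+2\ge 2$. Otherwise, the first part gives $x/y=p_k/q_k$ for some $k$, and the hypothesis $q_N>M>y=q_k$ forces $k+1\le N$, so $a_{k+1}\le a(M)$; combined with the displayed bound this finishes the proof.

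The main obstacle is really just bookkeeping in the first part: one must carefully dispose of the borderline case $y=q_n$ (where $x/y=p_n/q_n$ is forced by coprimality of both pairs) and keep enough terms in the continued-fraction expansion of $\kappa$ for $\xi_{k+1}$ to be defined (this is automatic when $\kappa$ is irrational, and otherwise requires only minor modifications). Beyond these details the argument is a direct application of well-known continued-fraction identities, and the utility of the lemma in what follows is precisely to handle the reduction step when Lemma~\ref{Dujella-Peto} is inapplicable because $\mu=0$ or $\mu$ is a multiple of $\tau$.
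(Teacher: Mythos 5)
The paper does not prove this lemma at all: it is quoted as a classical result of Legendre, with a reference to Cohen's book, so there is no in-paper argument to compare yours against. Your second assertion is handled correctly: the identity $|\kappa-p_k/q_k|=1/\bigl(q_k(q_k\xi_{k+1}+q_{k-1})\bigr)$ together with $\xi_{k+1}<a_{k+1}+1$ and $q_{k-1}\le q_k$ does give $|\kappa-p_k/q_k|>1/\bigl((a_{k+1}+2)q_k^2\bigr)$, and the index bookkeeping $q_k=y<M<q_N\Rightarrow k+1\le N\Rightarrow a_{k+1}\le a(M)$ is sound.

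The gap is in your proof of Legendre's theorem itself. With the ingredients you list, the triangle inequality gives
\[
\frac{1}{yq_n}\le\left|\frac{x}{y}-\frac{p_n}{q_n}\right|<\frac{1}{2y^2}+\frac{1}{q_nq_{n+1}},
\]
and multiplying by $yq_n$ yields $1<q_n/(2y)+y/q_{n+1}\le \tfrac12+y/q_{n+1}$, i.e.\ only $y>q_{n+1}/2$ --- no contradiction, since $y$ may well lie in $(q_{n+1}/2,\,q_{n+1})$. The ``short manipulation'' you promise does not exist with the estimate $|\kappa-p_n/q_n|\le 1/(q_nq_{n+1})$ alone. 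The standard repair is to replace that estimate by the best-approximation property of the second kind, $|q_n\kappa-p_n|\le|y\kappa-x|$ for all $0<y<q_{n+1}$; then the same triangle inequality gives
\[
\frac{1}{yq_n}\le\frac{|y\kappa-x|}{y}+\frac{|q_n\kappa-p_n|}{q_n}\le|y\kappa-x|\left(\frac1y+\frac1{q_n}\right)\le\frac{2|y\kappa-x|}{q_n},
\]
hence $|\kappa-x/y|\ge 1/(2y^2)$, the desired contradiction. (Alternatively, expand $x/y$ as a finite continued fraction of suitable parity and exhibit $\kappa=(p_n\omega+p_{n-1})/(q_n\omega+q_{n-1})$ with $\omega>1$.) Since the paper simply cites this lemma, you could equally well do the same; but as written your first part is not a proof.
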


%%%%%%%%%%%%%%%%%%%%%%%%%%%%%%%%%%%%%%%%%%%%%%%%%%%%%%%%%%%
\section{Fibonacci numbers as products of three repdgits in base $g$}\label{Sec_Fibo}
%%%%%%%%%%%%%%%%%%%%%%%%%%%%%%%%%%%%%%%%%%%%%%%%%%%%%%%%%%%

Our first aim is to prove Theorem~\ref{main_result1}.

%%%%%%%%%%%%%%%%%%%%%%%%%%%%%%%%%%%%%%%%%%%%%%%%%%%%%%%%%%%
\subsection{Proof of Theorem~\ref{main_result1}}
%%%%%%%%%%%%%%%%%%%%%%%%%%%%%%%%%%%%%%%%%%%%%%%%%%%%%%%%%%%

Note that if $n=1,$ then $\ell=m=1$ and therefore the Diophantine equation \eqref{main_equation1} becomes $F_k=d_1d_2d_3.$ Combining this with \eqref{F_n-L_n}, we have $k\le 3\log (g-1)/\log\alpha+2$. So, in this case the bound of $k$ from Theorem~\ref{main_result1} easily holds. For the rest of the proof we consider $n\ge 2.$ The following result will be useful in proving our main result which gives a
relation between $n$ and $k$ in equation \eqref{main_equation1}.
\begin{lemma}\label{k}
All solutions of the Diophantine equation \eqref{main_equation1} satisfy
$$
k<3n\dfrac{\log g}{\log \alpha}+2<10n\log g.
$$
\end{lemma}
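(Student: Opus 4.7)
The plan is an elementary chain of inequalities; there is essentially no obstacle, just bookkeeping. The idea is to bound each repdigit factor on the right-hand side of \eqref{main_equation1} crudely in terms of a power of $g$, combine to get an upper bound for $F_k$ in terms of $g^{3n}$, and then invert the Fibonacci lower bound from \eqref{F_n-L_n} to recover a bound for $k$.

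First I would observe that, since $1\le d_i\le g-1$, each factor satisfies
\[
d_i\cdot\frac{g^{s}-1}{g-1}\le (g-1)\cdot\frac{g^{s}-1}{g-1}=g^{s}-1<g^{s}.
\]
Applying this with $s=\ell,m,n$ respectively and multiplying the three bounds gives, from \eqref{main_equation1},
\[
F_k<g^{\ell+m+n}\le g^{3n},
\]
using the hypothesis $\ell\le m\le n$. On the other hand, by \eqref{F_n-L_n} we have $\alpha^{k-2}\le F_k$, so combining these two estimates and taking logarithms yields
\[
(k-2)\log\alpha<3n\log g,\qquad\text{hence}\qquad k<\frac{3n\log g}{\log\alpha}+2,
\]
which is the first half of the claim.

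For the second half I would just check numerically that $3/\log\alpha<6.24$. Since we are in the case $n\ge 2$ and $g\ge 2$, we have $n\log g\ge 2\log 2>1.38$, so
\[
10n\log g-\Bigl(\frac{3n\log g}{\log\alpha}+2\Bigr)\ge n\log g\Bigl(10-\tfrac{3}{\log\alpha}\Bigr)-2>3.76\cdot 1.38-2>0,
\]
which gives $\dfrac{3n\log g}{\log\alpha}+2<10n\log g$ and completes the lemma. The case $n=1$ has already been dispatched in the paragraph preceding the statement, so no further case analysis is required.
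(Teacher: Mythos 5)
Your proof is correct and follows essentially the same route as the paper: bound the product of the three repdigit factors by $g^{3n}$, combine with $\alpha^{k-2}\le F_k$ from \eqref{F_n-L_n}, and take logarithms. The only difference is that you spell out the numerical verification of the second inequality, which the paper leaves as an immediate consequence of $n\ge 2$ and $g\ge 2$.
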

\begin{proof}
From \eqref{F_n-L_n}, we have
$$  
\alpha^{k-2}\leq F_k=d_1\frac{g^\ell-1}{g-1}\cdot d_2\frac{g^m-1}{g-1}\cdot d_3\frac{g^n-1}{g-1}\le  (g^n-1)^3<g^{3n}.
$$
Taking logarithm on both sides, we get $(k-2)\log\alpha<3n\log g.$ Since, $n\ge 2$ and $g\ge 2,$ we obtain the desired inequalities. This ends the proof. 
\end{proof}

To start, we find the upper bounds for the variables $n, \ell, m$ of equation \eqref{main_equation1}. Using \eqref{FLn} and \eqref{main_equation1}, we get 
$$  
F_k=\dfrac{\alpha^k}{\sqrt{5}}-\dfrac{\beta^k}{\sqrt{5}}=d_1\frac{g^\ell-1}{g-1}\cdot d_2\frac{g^m-1}{g-1}\cdot d_3\frac{g^n-1}{g-1}
$$
to obtain 
\begin{align}\label{equation11}
\dfrac{\alpha^k}{\sqrt{5}}-\frac{d_1d_2d_3g^{\ell+m+n}}{(g-1)^3}&=\dfrac{\beta^k}{\sqrt{5}}-\frac{d_1d_2d_3g^{\ell+m}}{(g-1)^3}-\frac{d_1d_2d_3g^{n+\ell}}{(g-1)^3}+\frac{d_1d_2d_3g^\ell}{(g-1)^3}\\
&\nonumber -\frac{d_1d_2d_3g^{n+m}}{(g-1)^3}+\frac{d_1d_2d_3g^m}{(g-1)^3}+\frac{d_1d_2d_3g^n}{(g-1)^3}-\frac{d_1d_2d_3}{(g-1)^3}.
\end{align}
Taking the absolute values of both sides of \eqref{equation11}, we get 
\begin{align}\label{equation12}
\left| \dfrac{\alpha^k}{\sqrt{5}}-\frac{d_1d_2d_3g^{\ell+m+n}}{(g-1)^3}\right|&\le \dfrac{1}{\alpha^k\sqrt{5}}+\frac{d_1d_2d_3g^{\ell+m}}{(g-1)^3}+\frac{d_1d_2d_3g^{n+\ell}}{(g-1)^3}+\frac{d_1d_2d_3g^\ell}{(g-1)^3}\\
&\nonumber +\frac{d_1d_2d_3g^{n+m}}{(g-1)^3}+\frac{d_1d_2d_3g^m}{(g-1)^3}+\frac{d_1d_2d_3g^n}{(g-1)^3}+\frac{d_1d_2d_3}{(g-1)^3}.
\end{align}
Dividing both sides of \eqref{equation12} by $\frac{d_1d_2d_3g^{\ell+n+m}}{(g-1)^3}$ and using the fact that $\ell\le m\le n$ gives us the following inequalities.
\begin{align}
\nonumber \left|\frac{(g-1)^3\cdot\alpha^k\cdot g^{-(\ell+n+m)}}{d_1d_2d_3\sqrt{5}}-1 \right|& \leq \frac{(g-1)^3}{\alpha^k d_1d_2d_3g^{\ell+n+m}\sqrt{5}}+\frac{1}{g^n}+\frac{1}{g^m}+\frac{1}{g^{n+m}}\\
\notag &+\frac{1}{g^\ell}+\frac{1}{g^{n+\ell}}+\frac{1}{g^{\ell+m}}+\dfrac{1}{g^{\ell+m+n}}\\
\notag &< 8\cdot g^{-\ell}.
\end{align}
From this, it follows that 
\begin{equation}\label{equation13}
\left|\frac{(g-1)^3}{d_1d_2d_3\sqrt{5}}\cdot\alpha^k\cdot g^{-(\ell+n+m)}-1 \right| < \frac{8}{g^\ell}.
\end{equation}
Put
$$
\Lambda_1:=\frac{(g-1)^3}{d_1d_2d_3\sqrt{5}}\cdot\alpha^k\cdot g^{-(\ell+n+m)}-1.
$$
We need to show $\Lambda_1\ne 0$. Suppose $\Lambda_1=0$, then
$$
\alpha^{2k}=\dfrac{5(d_1d_2d_3)^2}{(g-1)^6}\cdot g^{2(\ell+m+n)}\in \Q(\sqrt{5})
$$
which is false. Thus $\Lambda_1\ne 0.$ To apply Theorem~\ref{Matveev} to $\Lambda_1,$ we choose the following data
$$
(\eta_1, b_1):=\left(\frac{(g-1)^3}{d_1d_2d_3\sqrt{5}},1 \right),\; (\eta_2, b_2):=(\alpha, k),\; (\eta_3, b_3):=(g, -(\ell+m+n))
$$
and $s:=3.$ Note that $\eta_1, \eta_2, \eta_3\in \bQ(\alpha)$ and $b_1, b_2, b_3\in \bZ.$ The degree $d_\bL=[\bQ(\alpha):\bQ]$ is $2$, where $\bL$ is $\bQ(\alpha).$ According to the inequalities from Lemma~\ref{k}, we can take $B:=10n\log g$ because $B\ge \max\{|b_1|,|b_2|,|b_3|\}.$ To estimate the parameters $A_1, A_2, A_3 ,$ we calculate the logarithmic heights
of $\eta_1, \eta_2, \eta_3$ as follows:
\begin{align*}
h(\eta_1)=h\left(\frac{(g-1)^3}{d_1d_2d_3\sqrt{5}} \right)&\le h\left(\frac{(g-1)^3}{d_1d_2d_3}\right)+h\left(\dfrac{1}{\sqrt{5}}\right)\\
&=\log\left(\max\{(g-1)^3, d_1d_2d_3\} \right)+\dfrac{1}{2}\log 5\\
&=3\log (g-1)+\dfrac{1}{2}\log 5<4\log g
\end{align*}
and
\begin{align*}
h(\alpha)=\dfrac{1}{2}\log\alpha\quad \text{and}\quad h(\eta_3)=\log g.
\end{align*}
Thus, one can take
$$
A_1=8\log g,\quad A_2=\log\alpha\quad \text{and}\quad A_3=2\log g.
$$
Then, we apply Theorem~\ref{Matveev} and find
\begin{align}\label{lambda_1}
\log|\Lambda_1|>-1.4\cdot 30^6\cdot 3^{4.5}\cdot 4\cdot (1+\log 2)\cdot (1+\log (10n\log g))\cdot A
\end{align}
where $A:=A_1A_2A_3=16\log\alpha\cdot\log^2g.$
Comparing inequality \eqref{lambda_1} with \eqref{equation13} gives
\begin{align}
\ell \log g-\log8<7.47\times 10^{12}\cdot (1+\log (10n\log g))\cdot \log^2g.
\end{align}
Because for $g\ge 2$ and $n\ge 2,$
$$
1+\log (10n\log g)<10\log n\cdot \log g,
$$
we would get 
\begin{align}\label{upper of ell}
\ell<7.5\times 10^{13}\cdot \log n\cdot \log^2g.
\end{align}
Secondly, we rewrite \eqref{main_equation1} as
\[
\dfrac{\alpha^k (g-1)}{d_1(g^\ell-1)\sqrt{5}}-\dfrac{\beta^k (g-1)}{d_1(g^\ell-1)\sqrt{5}}=\dfrac{d_2d_3}{(g-1)^2}\left(g^{n+m}-g^m-g^n+1 \right),
\]
which implies
\begin{align}\label{Absolute}
 \dfrac{\alpha^k (g-1)}{d_1(g^\ell-1)\sqrt{5}}-\dfrac{d_2d_3g^{n+m}}{(g-1)^2}&=\dfrac{\beta^k (g-1)}{d_1(g^\ell-1)\sqrt{5}}-\dfrac{d_2d_3g^m}{(g-1)^2}- \dfrac{d_2d_3g^n}{(g-1)^2}\\
\nonumber &+\dfrac{d_2d_3}{(g-1)^2}.
\end{align}
Taking the absolute values of both sides of \eqref{Absolute}, we have
\begin{align*}
 \left| \dfrac{\alpha^k (g-1)}{d_1(g^\ell-1)\sqrt{5}}-\dfrac{d_2d_3g^{n+m}}{(g-1)^2}\right|&\le \dfrac{ (g-1)}{d_1(g^\ell-1)\alpha^k\sqrt{5}}+\dfrac{d_2d_3g^m}{(g-1)^2}+ \dfrac{d_2d_3g^n}{(g-1)^2}\\
 &+\dfrac{d_2d_3}{(g-1)^2}.
\end{align*}
Dividing the both sides of the inequality above by $\dfrac{d_2d_3g^{n+m}}{(g-1)^2}$ and using the fact that $n\ge 2,$ leads to 
\begin{align*}
 \left|\dfrac{(g-1)^3}{d_1d_2d_3(g^\ell-1)\sqrt{5}}\cdot\alpha^k\cdot g^{-(n+m)}-1\right|&\le \dfrac{(g-1)^3}{d_1d_2d_3(g^\ell-1)\alpha^k g^{n+m} \sqrt{5}}\\
 &+\dfrac{1}{g^n}+\dfrac{1}{g^m}+\dfrac{1}{g^{n+m}}<4\cdot g^{-m}.
\end{align*}
Therefore,
\begin{align}\label{Absolute_1}
\left|\dfrac{(g-1)^3}{d_1d_2d_3(g^\ell-1)\sqrt{5}}\cdot\alpha^k\cdot g^{-(n+m)}-1\right|<\dfrac{4}{g^m}.
\end{align}
Now, let us apply Theorem~\ref{Matveev} with
$$
(\eta_1, b_1):=\left(\dfrac{(g-1)^3}{d_1d_2d_3(g^\ell-1)\sqrt{5}}, 1 \right),  (\eta_3, b_3):=(g, -n-m),
$$
$ (\eta_2, b_2):=(\alpha, k),\; s:=3$ and $B:=10 n\log g.$ Note that the numbers $\eta_1, \eta_2$ and $\eta_3$ are
positive real numbers and elements of the ﬁeld $\bL= \bQ(\sqrt{5}).$ It is obvious that
the degree of the ﬁeld $\bL$ is $2.$ So $d_{\bL}=2.$ Let
$$
\Lambda_2:=\dfrac{(g-1)^3}{d_1d_2d_3(g^\ell-1)\sqrt{5}}\cdot\alpha^k\cdot g^{-(n+m)}-1.
$$
If $\Lambda_2=0$, then we get
$$
\alpha^{2k}=\dfrac{5(d_1d_2d_3)^2(g^\ell-1)^2g^{2(n+m)}}{(g-1)^6}\in \bQ(\sqrt{5}).
$$
This is impossible as $\alpha^{2k}$ is irrational for $k\ge 1.$ Therefore, $\Lambda_2$ is nonzero. Moreover, since
\begin{align*}
h(\eta_1&=h\left(\dfrac{(g-1)^3}{d_1d_2d_3(g^\ell-1)\sqrt{5}} \right)\le h\left(\dfrac{(g-1)^3}{d_1d_2d_3} \right) +h\left( \dfrac{1}{g^\ell-1}\right)+h\left(\dfrac{1}{\sqrt{5}}\right)\\
&=\log \max\{(g-1)^3, d_1d_2d_3\}+\log (g^\ell-1)+\dfrac{1}{2}\log 5\\
&<3\log (g-1)+\log (g^\ell-1)+\dfrac{1}{2}\log 5<(5+\ell)\log g,
\end{align*}
and
$$
h(\eta_2)=\dfrac{1}{2}\log \alpha,\quad h(\eta_3)=\log g,
$$
we can take $A_1:=2(\ell+5)\log g,\; A_2:=\log\alpha$ and $A_3:=2\log g.$ Thus, taking into account the inequality \eqref{Absolute_1} and using Theorem~\ref{Matveev}, we obtain
$$
m\log g-\log 4<1.4\cdot 30^6\cdot 3^{4.5}\cdot 4\cdot (1+\log 2)\cdot (1+\log B)\cdot A
$$
where $A=A_1A_2A_3=4(5+\ell)\log\alpha\cdot \log^2g.$  Since $g\ge 2$ and $1+\log B<10\log n\log g,$ it follows that 
\begin{align}\label{upper em1}
m<1.87\times 10^{13}\cdot (5+\ell)\cdot\log n\cdot\log^2 g+2.
\end{align}
By \eqref{upper of ell}, we have
\begin{align}\label{5ell}
5+\ell<7.6\times 10^{13}\cdot \log n\cdot \log^2g.
\end{align}
Therefore from \eqref{upper em1} and \eqref{5ell}, we esealy get 
\begin{align}\label{mm}
m<1.5\times 10^{27}\log^2 n\log^4 g.
\end{align}
Rearranging now equation \eqref{main_equation1} as 
\[
\dfrac{d_3 g^n}{g-1}-\dfrac{(g-1)^2\alpha^k}{d_1d_2(g^\ell-1)(g^m-1)\sqrt{5}}=\dfrac{d_3}{g-1}-\dfrac{\beta^k(g-1)^2}{d_1d_2(g^\ell-1)(g^m-1)\sqrt{5}}
\]
and taking absolute values of both sides of the equality above, we get
\begin{align}\label{aabsolute}
\left| \dfrac{d_3 g^n}{g-1}-\dfrac{(g-1)^2\alpha^k}{d_1d_2(g^\ell-1)(g^m-1)\sqrt{5}}\right|\le  \dfrac{d_3}{g-1}+\dfrac{(g-1)^2}{\alpha^kd_1d_2(g^\ell-1)(g^m-1)\sqrt{5}}.
\end{align}
Dividing both sides of \eqref{aabsolute} by $\dfrac{d_3g^n}{g-1}$ and using the fact that $n\ge 2$, we obtain
\begin{align}\label{aLambd_3}
\left|1- \dfrac{(g-1)^3}{d_1d_2d_3(g^\ell-1)(g^m-1)\sqrt{5}}\cdot g^{-n}\cdot \alpha^k\right|<\dfrac{1}{g^n}+\dfrac{1}{g^{n-1}}<\dfrac{2}{g^{n-1}}.
\end{align}
Put
\begin{align}\label{Lambda_3}
\Lambda_3:=\dfrac{(g-1)^3}{d_1d_2d_3(g^\ell-1)(g^m-1)\sqrt{5}}\cdot g^{-n}\cdot \alpha^k-1.
\end{align}
Next, we apply Theorem~\ref{Matveev} on \eqref{Lambda_3}. First, we need to check that $\Lambda_3\ne 0$. If it were, then we would get that
$$
\alpha^{2k}=\dfrac{5(d_1d_2d_3)^2(g^\ell-1)^2(g^m-1)^2g^{2n}}{(g-1)^6}\in \bQ(\sqrt{5})
$$
which is false. Thus, $\Lambda_3\ne 0.$ So, we apply Theorem~\ref{Matveev} on \eqref{Lambda_3} with the data:
$$
s:=3,\; (\eta_1, b_1):=\left(\dfrac{(g-1)^3}{d_1d_2d_3(g^\ell-1)(g^m-1)\sqrt{5}}, 1 \right),  (\eta_2, b_2):=(g, -n),
$$
and $ (\eta_3, b_3):=(\alpha, k).$ Because, $B\ge \max\{|b_1|,|b_2|,|b_3|\}=\max\{1, n,k\}$ and using the inequality from Lemma~\ref{k} we can take $B:=10 n\log g.$ Note that 
$\eta_1, \eta_2, \eta_3\in \bQ(\alpha)$. Observe that $\bL:=\bQ(\eta_1, \eta_2, \eta_3)=\bQ(\alpha),$ so $d_\bL=2.$ Next,
\begin{align*}
h(\eta_1)&=h\left(\dfrac{(g-1)^3}{d_1d_2d_3(g^\ell-1)(g^m-1)\sqrt{5}} \right)\\
&\le h\left(\dfrac{(g-1)^3}{d_1d_2d_3} \right)+h\left( (g^\ell-1)(g^m-1)\right)+h(\sqrt{5})\\
&< (3+\ell+m)\log g+\dfrac{1}{2}\log 5<(5+\ell+m)\log g.
\end{align*}
Thus, we can take
\[
A_1:=2(5+\ell+m)\log g,\quad A_2:=2\log g\quad \text{and}\quad A_3:=\log\alpha.
\]
Theorem~\ref{Matveev} tells us that
\begin{align}
\log|\Lambda_3|>-1.4\cdot 30^6\cdot 3^{4.5}\cdot 4\cdot (1+\log 2)\cdot (1+\log B)\cdot A
\end{align}
with 
\begin{align}
A=A_1A_2A_3=2\log g\cdot \log\alpha \cdot (10+2\ell+2m)\log g
\end{align}
and 
\begin{align}
1+\log B<10\log n\log g.
\end{align}
Combining the above three inequalities with \eqref{aLambd_3} gives
$$
(n-1)\log g-\log2<1.87\times 10^{13}(5+\ell+m)\log n\log^3 g,
$$
which leads to
\begin{align}\label{nn}
n<1.87\times 10^{13}(5+\ell+m)\log n\log^2 g+2.
\end{align}
Referring to the inqualities \eqref{upper of ell}, \eqref{mm} and \eqref{nn}, we have
\begin{align}
\nonumber 5+\ell+m&<5+7.5\times 10^{13}\cdot \log n\cdot \log^2g+1.5\times 10^{27}\log^2 n\log^4 g\\
&<3\times 10^{27}\log^2 n\log^4 g.
\end{align}
Inserting this in \eqref{nn} leads to 
\begin{align}
n<5.7\times 10^{40}\cdot \log^3 n\cdot\log^6 g.
\end{align}
We are now in position to apply Lemma~\ref{lemma_G-Luca} with the data
$$
l=3,\quad L=n\quad \text{and}\quad H:=5.7\times 10^{40}\cdot\log^6 g.
$$
Therefore, 
\begin{align*}
n&<2^3\cdot 5.7\times 10^{40}\cdot\log^6 g\times \log^3\left(5.7\times 10^{40}\cdot\log^6 g \right)\\
&<2^3\cdot 5.7\times 10^{40}\cdot\log^6 g\cdot (93.84+6\log\log g)^3\\
&<1.08\times 10^{48}\log^9 g.
\end{align*}
In the above inequality, we have used the fact that $93.84+6\log\log g<133\log g,$ which holds for all $g\ge 2.$ Hence, we summarize that all the solutions of \eqref{main_equation1} satisfy
$$
n<1.08\times 10^{48}\log^9 g\quad \text{and}\quad k<10n\log g<1.08\times 10^{49}\log^{10} g.
$$
Hence, the proof of Theorem~\ref{main_result1} is finished.$\quad\quad \quad\quad \quad\quad\; \quad\quad\quad\quad$ $\square$

\begin{remark}
The inequalities from Theorem~\ref{main_result1} allows one to compute all the solutions to \eqref{main_equation1} for every fixed $g.$
\end{remark}

%%%%%%%%%%%%%%%%%%%%%%%%%%%%%%%%%%%%%%%%%%%%%%%%%%%%%%%%%%%
\subsection{Application for the decimal base}
%%%%%%%%%%%%%%%%%%%%%%%%%%%%%%%%%%%%%%%%%%%%%%%%%%%%%%%%%%%

Now, as an illustration, we solve the Diophantine equation \eqref{main_equation1} for $g=10.$ When $g=10,$ the bound
on $k$ becomes
$$
k<4.6\times 10^{52}.
$$
Thus, our main result in this case is the following.
\begin{theorem}\label{Application_Fibo}
The only Fibonacci numbers which are products of three repdigits are
$
1, 2, 3, 5, 8, 21, 55\; \text{and}\; 144.
$

\end{theorem}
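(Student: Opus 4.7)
The plan is to take the bound $k<4.6\times 10^{52}$ noted right after Theorem~\ref{main_result1} and reduce it to a computationally tractable range by three successive applications of the Baker--Davenport lemma (Lemma~\ref{Dujella-Peto}), one for each of the linear forms $\Lambda_1$, $\Lambda_2$, $\Lambda_3$ constructed in the proof of Theorem~\ref{main_result1}, finishing with a direct search.

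First I would set $g=10$ in the inequality $|\Lambda_1|<8\cdot 10^{-\ell}$. Using the standard estimate $|\log(1+x)|<2|x|$ for $|x|<1/2$, this rewrites as
\[
\left|k\,\frac{\log\alpha}{\log 10}-(\ell+m+n)+\frac{\log\!\bigl(9^3/(d_1 d_2 d_3\sqrt{5})\bigr)}{\log 10}\right|<\frac{16}{\log 10}\cdot 10^{-\ell}.
\]
I would then apply Lemma~\ref{Dujella-Peto} with $\tau=\log\alpha/\log 10$ (irrational), $A=16/\log 10$, $B=10$, $M=4.6\times 10^{52}$, and with $\mu$ running over the $9^3=729$ values obtained from the triples $(d_1,d_2,d_3)$. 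For a convergent $p/q$ of $\tau$ with $q>6M$ (a hundred or so partial quotients should be ample), one expects $\varepsilon=\|\mu q\|-M\|\tau q\|>0$ in every nondegenerate case and a resulting bound on $\ell$ of order a few hundred.

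Armed with this reduced bound on $\ell$, I would then attack $|\Lambda_2|<4\cdot 10^{-m}$ in exactly the same way. The $\mu$ appearing in the rewritten inequality now depends on $(d_1,d_2,d_3,\ell)$, so there are at most a few hundred thousand cases to run, but each is a routine continued-fraction computation and should yield a comparable bound on $m$. A third pass on $|\Lambda_3|<2\cdot 10^{-(n-1)}$, with $\mu$ now depending on $(d_1,d_2,d_3,\ell,m)$, should deliver a bound on $n$ small enough to permit a direct enumeration, via Binet's formula, of all Fibonacci numbers that are products of three repdigits in base $10$, recovering exactly $1,2,3,5,8,21,55,144$.

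The configurations in which Lemma~\ref{Dujella-Peto} fails, namely $\mu=0$ or $\varepsilon\le 0$, must be handled separately via Legendre's criterion (Lemma~\ref{Lemma-Legendre}), which still furnishes a serviceable irrationality measure for $\tau$ and hence a comparable reduced bound. The chief practical obstacle is the combinatorial blow-up at the third reduction step, where a large number of tuples $(d_1,d_2,d_3,\ell,m)$ must each be processed with a fresh continued fraction to sufficient depth; careful bookkeeping is essential, but the computation is well within the reach of standard computer algebra systems.
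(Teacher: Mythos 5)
Your proposal is essentially the paper's own proof: three successive Dujella--Peth\H{o} reductions applied to $\Lambda_1$, $\Lambda_2$, $\Lambda_3$ with $\tau=\log\alpha/\log 10$, degenerate cases deferred to Legendre's criterion, followed by a direct search. Two minor points: your constant $A=16/\log 10$ rests on $|\log(1+x)|<2|x|$ for $|x|<1/2$, which fails when $\ell=1$ (where one only has $|\Lambda_1|<4/5$; the paper instead uses $e^{-z_1}<5$ to get $A=17.4$), and the paper re-derives much smaller bounds on $n$ and $k$ (hence on $M$) after each reduction step, which you omit --- this keeps the final bounds at $\ell\le 58$, $m\le 41$, $n\le 26$ and the search small.
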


\begin{proof}
First, we have to reduce the bounds on $\ell, m, n$ and $k.$ Put
\begin{align*}
z_1&:=\log(\Lambda_1+1)\\
          &=k\log\alpha-(\ell+m+n)\log 10+\log\left(\dfrac{729}{d_1d_2d_3\sqrt{5}}\right).
\end{align*}
Inequality \eqref{equation13} can be written as
\[
\left|e^{z_1}-1\right|<\dfrac{8}{10^\ell}.
\]
Since $\ell\ge 1,$ then we have $\left|e^{z_1}-1\right|<\dfrac{8}{10^\ell}<\dfrac{4}{5}$ which implies that $\dfrac{5}{9}<e^{-z_1}<5.$ If $z_1>0,$ then
$$
0<z_1<e^{z_1}-1=\left|e^{z_1}-1\right|<\dfrac{8}{10^\ell}.
$$
If $z_1<0$, then 
$$
0<|z_1|<e^{|z_1|}-1=e^{-z_1}(1-e^{z_1})<\dfrac{40}{10^\ell}.
$$
In any case, it is always holds true $0<|z_1|<\dfrac{40}{10^\ell},$ which implies
\begin{align}\label{ineq_DP1}
0<\left|k\dfrac{\log \alpha}{\log 10}-(\ell+m+n)+\dfrac{\log\left(729/d_1d_2d_3\sqrt{5}\right)}{\log 10}\right|<17.4\cdot 10^{-\ell}.
\end{align}
It is easy to see that $\dfrac{\log \alpha}{\log 10}$ is irrational. In fact, if $\dfrac{\log \alpha}{\log 10}=\dfrac{p}{q}$ ($p, q\in \bZ$ and $p>0, q>0,$ $\gcd(p, q)=1$), then $10^p=\alpha^q\in \bZ$ which is an absurdity. Now, we apply Lemma~\ref{Dujella-Peto} with $w:=\ell,$
\[
\tau:=\dfrac{\log \alpha}{\log10},\quad \mu:=\dfrac{\log\left(729/d_1d_2d_3\sqrt{5}\right)}{\log 10},\quad A:=17.4,\quad B:=10.
\]
Because $k<4.6\times 10^{52},$ we can take $M:=4.6\times 10^{52}$. Therefore, for the remaining proof, we use Mathematica to apply Lemma~\ref{Dujella-Peto}. For the computations, if the first convergent such that $q > 6M$ does not
satisfy the condition $\varepsilon>0,$ then we use the next convergent until we
find the one that satisfies the conditions. Then we found that the denominator of the $115rd$ convergent
$$
\dfrac{p_{115}}{q_{115}}=\dfrac{1 532 282 514 732 971 248 699 360 262 855 137 347 685 624 203 086 792 614}{7 331 928 878 186 982 501 184 370 491 249 297 952 824 659 131 062 806 099}
$$
of $\tau$ exceeds $6M.$ Thus, we can say that the inequality \eqref{ineq_DP1} has no solution for
$$
\ell=w\ge \dfrac{\log (Aq_{115}/\varepsilon)}{\log 10}\ge\dfrac{\log(Aq_{115}/0.00809526)}{\log 10}\ge 58.1975.
$$
So, we obtain
\begin{align}\label{ell_1}
\ell\le 58.
\end{align}
Substituting this upper bound for $\ell$ into \eqref{upper em1} and combining the new bound obtained with \eqref{nn}, we get 
$$
 n<6.4\times10^{29}\cdot \log^2 n,
$$
which implies $n<1.3\times 10^{34}$ using Lemma~\ref{lemma_G-Luca}. Thus, by Lemma~\ref{k} we have $k<3\times 10^{35}.$ Next, we need to reduce the bound on $m.$ We return to \eqref{Absolute_1} and put
\begin{align*}
z_2&:=\log(\Lambda_2+1)\\
         &=k\log\alpha-(n+m)\log 10+\log\left(\dfrac{729}{d_1d_2d_3(10^\ell-1)\sqrt{5}} \right).
\end{align*}
From the inequality \eqref{Absolute_1} and $m\ge 1$, we conclude that
\[
\left|e^{z_2}-1\right|<\dfrac{4}{10^m}<\dfrac{1}{2},
\]
which implies that
$\dfrac{1}{2}<e^{z_2}<\dfrac{3}{2}.$ If $z_2>0,$ then $0<z_2<e^{z_2}-1<\dfrac{4}{10^m}.$ If $z_2<0$, then
$$
 0<|z_2|<e^{|z_2|}-1=e^{-z_2}-1=e^{-z_2}(1-e^{z_2})<\dfrac{8}{10^m}.
$$ 
In any case, since $0<|z_2|<\dfrac{8}{10^m},$ thus we have
\begin{align}\label{ineq_DP2}
0<\left| k\dfrac{\log \alpha}{\log 10}-(n+m)+\dfrac{\log\left(729/d_1d_2d_3(10^\ell-1)\sqrt{5} \right)}{\log 10}\right|<\dfrac{3.5}{10^m}.
\end{align}
Again, we apply Lemma~\ref{Dujella-Peto} with
\[
\tau:=\dfrac{\log \alpha}{\log 10},\; \mu:=\dfrac{\log\left(729/d_1d_2d_3(10^\ell-1)\sqrt{5} \right)}{\log 10},\; A:=3.5,\; B:=10
\]
and 
$M:=3\times 10^{35}.$ With the help of Mathematica, we found that the denominator of the $77rd$ convergent
$$
\dfrac{p_{77}}{q_{77}}=\dfrac{1 097 876 139 463 713 781 430 275 039 172 749 779}{5 253 306 550 332 349 137 376 600 680 873 772 748}
$$
of $\tau$ exceeds $6M.$ It follows that the inequality \eqref{ineq_DP2} has no solution for
$$
m=w\ge \dfrac{\log (Aq_{77}/\varepsilon)}{\log 10}\ge\dfrac{\log(Aq_{77}/0.000111931)}{\log 10}\ge 41.2155.
$$
Hence, we obtain
\begin{align}\label{m_1}
m\le 41.
\end{align}
Inserting the bounds from \eqref{ell_1} and \eqref{m_1} in \eqref{nn}, we get 
$$
n<1.1\times 10^{16}\log n+2,
$$
which leads to $n<8.2\times 10^{17}$ and hence to $k<2\times 10^{19}.$ Finaly, we have to reduce the bound on $n.$ From \eqref{aLambd_3}, we can put
\begin{align*}
z_3&:=\log(\Lambda_3+1)\\
         &=k\log\alpha-n\log 10+\log\left(\dfrac{729}{d_1d_2d_3(10^\ell-1)(10^m-1)\sqrt{5}} \right).
\end{align*}
By following what is done in previous cases, it is easy to see that for $n\ge 2,$ we have
\begin{align}\label{ineq_DP3}
0<\left| k\dfrac{\log \alpha}{\log 10}-n+\dfrac{\log\left(729/d_1d_2d_3(10^\ell-1)(10^m-1)\sqrt{5} \right)}{\log 10}\right|<\dfrac{1.8}{10^{n-1}}.
\end{align}
Now, we apply Lemma~\ref{Dujella-Peto} to \eqref{ineq_DP3} with $B:=10,$
\[
\tau:=\dfrac{\log \alpha}{\log 10},\; \mu:=\dfrac{\log\left(729/d_1d_2d_3(10^\ell-1)(10^m-1)\sqrt{5} \right)}{\log 10},\; A:=1.8,
\]
and 
$M:=2\times 10^{19}.$ We saw that the denominator of the $44rd$ convergent
$$
\dfrac{p_{44}}{q_{44}}=\dfrac{259 791 952 914 951 895 804}{1 243 097 211 893 507 332 887}
$$
of $\tau$ exceeds $6M.$ Therefore the inequality \eqref{ineq_DP3} has no solution for
$$
n-1=w\ge \dfrac{\log (Aq_{44}/\varepsilon)}{\log 10}\ge\dfrac{\log(Aq_{44}/0.0000637147)}{\log 10}\ge 25.5455.
$$
Hence, we obtain
\begin{align}
n\le 26.
\end{align}
So, it remains to check equation \eqref{main_equation1} in the case $g=10$ for $1\le d_1,d_2,d_3\le 9$, $1 \le n \le 26,$ $1 \le k \le 598,$ $1\le \ell\le 58$ and $1\le m \le 41.$ A quick inspection using Maple reveals that the Diophantine equation \eqref{main_equation1} has only the solutions listed in the statement of Theorem~\ref{Application_Fibo}. This ends the proof of Theorem~\ref{Application_Fibo}.
\end{proof}

%%%%%%%%%%%%%%%%%%%%%%%%%%%%%%%%%%%%%%%%%%%%%%%%%%%%%%%%%%%
\section{Lucas numbers as products of three repdgits in base $g$}\label{Sec_Lucas}
%%%%%%%%%%%%%%%%%%%%%%%%%%%%%%%%%%%%%%%%%%%%%%%%%%%%%%%%%%%

In this section, we will follow the method from Section~\ref{Sec_Fibo}. For the sake of completeness, we will give most of details. Our first aim is to prove Theorem~\ref{main_result2}.

%%%%%%%%%%%%%%%%%%%%%%%%%%%%%%%%%%%%%%%%%%%%%%%%%%%%%%%%%%%
\subsection{Proof of Theorem~\ref{main_result2}}
%%%%%%%%%%%%%%%%%%%%%%%%%%%%%%%%%%%%%%%%%%%%%%%%%%%%%%%%%%%

Here too by taking $n=1,$ we easily verify that the bound of $k$ from Theorem~\ref{main_result2} is valid. Now let us see what happens for $n\ge 2$. The following result will be useful in proving our main result which gives a relation between $n$ and $k$ of equation \eqref{main_equation2}.
\begin{lemma}\label{k1}
All solutions of the Diophantine equation \eqref{main_equation2} satisfy
$$
k<3n\dfrac{\log g}{\log \alpha}+1<10n\log g.
$$
\end{lemma}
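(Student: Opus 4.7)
The plan is to imitate the proof of Lemma~\ref{k} verbatim, with the only change being that we use the Lucas lower bound $\alpha^{k-1}\le L_k$ from \eqref{F_n-L_n} in place of the Fibonacci lower bound $\alpha^{k-2}\le F_k$. Because the right-hand side of \eqref{main_equation2} has the same form as the right-hand side of \eqref{main_equation1}, the upper estimate for the product of the three repdigits is identical, and the single power of $\alpha$ that we gain from the sharper Lucas estimate explains why the constant $+2$ in Lemma~\ref{k} is replaced by $+1$ in the statement.

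Concretely, I would proceed in three short steps. First, invoke \eqref{F_n-L_n} to write $\alpha^{k-1}\le L_k$. Second, estimate the product side of \eqref{main_equation2}: since $1\le d_i\le g-1$, each factor satisfies
\[
d_i\cdot\frac{g^{n_i}-1}{g-1}\le g^{n_i}-1<g^{n_i},
\]
so, using $\ell\le m\le n$,
\[
L_k=d_1\frac{g^\ell-1}{g-1}\cdot d_2\frac{g^m-1}{g-1}\cdot d_3\frac{g^n-1}{g-1}<(g^n-1)^3<g^{3n}.
\]
Combining the two inequalities and taking logarithms gives $(k-1)\log\alpha<3n\log g$, i.e.
\[
k<\frac{3n\log g}{\log\alpha}+1,
\]
which is the first half of the claimed bound.

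Finally, to pass to $k<10n\log g$, I would verify the trivial numerical estimate
\[
\frac{3}{\log\alpha}+\frac{1}{n\log g}<10
\]
under the standing hypotheses $n\ge 2$ and $g\ge 2$. Since $3/\log\alpha<6.24$ and $1/(n\log g)\le 1/(2\log 2)<0.73$, the sum is bounded by $6.97<10$, so multiplying by $n\log g$ yields $k<10n\log g$ as required.

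There is essentially no obstacle here; the proof is a direct transcription of the Fibonacci argument in Lemma~\ref{k} with a single sign-of-exponent change. The only thing worth being careful about is not losing the tighter constant $1$ (as opposed to $2$): one must quote $\alpha^{k-1}\le L_k$ rather than the weaker $\alpha^{k-2}\le L_k$ that would otherwise suffice.
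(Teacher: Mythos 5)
Your proof is correct and follows the paper's argument exactly: the authors likewise combine $\alpha^{k-1}\le L_k$ from \eqref{F_n-L_n} with the bound $L_k\le (g^n-1)^3<g^{3n}$ and take logarithms to get $(k-1)\log\alpha<3n\log g$, then note that $n\ge 2$ and $g\ge 2$ give the second inequality. Your explicit numerical verification of $3/\log\alpha+1/(n\log g)<10$ is a welcome bit of detail the paper leaves implicit.
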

\begin{proof}
From \eqref{F_n-L_n}, we have
$$  
\alpha^{k-1}\leq L_k=d_1\frac{g^\ell-1}{g-1}\cdot d_2\frac{g^m-1}{g-1}\cdot d_3\frac{g^n-1}{g-1}\le  (g^n-1)^3<g^{3n}.
$$
Taking logarithm on both sides, we get $(k-1)\log\alpha<3n\log g.$ Since, $n\ge 2$ and $g\ge 2,$ we obtain the desired inequalities. This ends the proof. 
\end{proof}

First, we find the upper bounds for the variables $n, \ell, m$ of equation \eqref{main_equation2}. From \eqref{FLn} and \eqref{main_equation2}, we get 
$$  
L_k=\alpha^k+\beta^k=d_1\frac{g^\ell-1}{g-1}\cdot d_2\frac{g^m-1}{g-1}\cdot d_3\frac{g^n-1}{g-1}
$$
to obtain 
\begin{align}\label{1equation11}
\alpha^k-\frac{d_1d_2d_3g^{\ell+m+n}}{(g-1)^3}&=-\beta^k-\frac{d_1d_2d_3g^{\ell+m}}{(g-1)^3}-\frac{d_1d_2d_3g^{n+\ell}}{(g-1)^3}+\frac{d_1d_2d_3g^\ell}{(g-1)^3}\\
&\nonumber -\frac{d_1d_2d_3g^{n+m}}{(g-1)^3}+\frac{d_1d_2d_3g^m}{(g-1)^3}+\frac{d_1d_2d_3g^n}{(g-1)^3}-\frac{d_1d_2d_3}{(g-1)^3}.
\end{align}
Taking the absolute values of both sides of \eqref{1equation11}, we get 
\begin{align}\label{1equation12}
\left|\alpha^k-\frac{d_1d_2d_3g^{\ell+m+n}}{(g-1)^3}\right|&\le\dfrac{1}{\alpha^k}+\frac{d_1d_2d_3g^{\ell+m}}{(g-1)^3}+\frac{d_1d_2d_3g^{n+\ell}}{(g-1)^3}+\frac{d_1d_2d_3g^\ell}{(g-1)^3}\\
&\nonumber +\frac{d_1d_2d_3g^{n+m}}{(g-1)^3}+\frac{d_1d_2d_3g^m}{(g-1)^3}+\frac{d_1d_2d_3g^n}{(g-1)^3}+\frac{d_1d_2d_3}{(g-1)^3}.
\end{align}
Dividing both sides of \eqref{1equation12} by $\frac{d_1d_2d_3g^{\ell+n+m}}{(g-1)^3}$ and using the fact that $\ell\le m\le n$ gives us the following inequalities.
\begin{align}
\nonumber \left|\frac{(g-1)^3\cdot\alpha^k\cdot g^{-(\ell+n+m)}}{d_1d_2d_3}-1 \right|& \leq \frac{(g-1)^3}{\alpha^k d_1d_2d_3g^{\ell+n+m}}+\frac{1}{g^n}+\frac{1}{g^m}+\frac{1}{g^{n+m}}\\
\notag &+\frac{1}{g^\ell}+\frac{1}{g^{n+\ell}}+\frac{1}{g^{\ell+m}}+\dfrac{1}{g^{\ell+m+n}}\\
\notag &< 8\cdot g^{-\ell}.
\end{align}
From this, it follows that 
\begin{equation}\label{1equation13}
\left|\frac{(g-1)^3}{d_1d_2d_3}\cdot\alpha^k\cdot g^{-(\ell+n+m)}-1 \right| < \frac{8}{g^\ell}.
\end{equation}
Let
$$
\Lambda_4:=\frac{(g-1)^3}{d_1d_2d_3}\cdot\alpha^k\cdot g^{-(\ell+n+m)}-1.
$$
We need to show $\Lambda_4\ne 0$. Suppose $\Lambda_4=0$, then
$$
\alpha^{n}=\dfrac{d_1d_2d_3}{(g-1)^3}\cdot g^{\ell+m+n}\in \Q(\sqrt{5})
$$
which is false. Thus $\Lambda_4\ne 0.$ To apply Theorem~\ref{Matveev} to $\Lambda_4,$ we choose the following data
$$
(\eta_1, b_1):=\left(\frac{(g-1)^3}{d_1d_2d_3},1 \right),\; (\eta_2, b_2):=(\alpha, k),\; (\eta_3, b_3):=(g, -(\ell+m+n))
$$
and $s:=3.$ Note that $\eta_1, \eta_2, \eta_3\in \bQ(\alpha)$ and $b_1, b_2, b_3\in \bZ.$ The degree $d_\bL=[\bQ(\alpha):\bQ]$ is $2$, where $\bL$ is $\bQ(\alpha).$ According to the inequalities from Lemma~\ref{k1}, we can take $B:=10n\log g.$ To estimate the parameters $A_1, A_2, A_3 ,$ we calculate the logarithmic heights
of $\eta_1, \eta_2, \eta_3$ as follows:
\begin{align*}
h(\eta_1)=h\left(\frac{(g-1)^3}{d_1d_2d_3} \right)
&=\log\left(\max\{(g-1)^3, d_1d_2d_3\} \right)\\
&=3\log (g-1)<3\log g
\end{align*}
and
\begin{align*}
h(\alpha)=\dfrac{1}{2}\log\alpha\quad \text{and}\quad h(\eta_3)=\log g.
\end{align*}
Thus, one can take
$$
A_1=6\log g,\quad A_2=\log\alpha\quad \text{and}\quad A_3=2\log g.
$$
Then, we apply Theorem~\ref{Matveev} and find
\begin{align}\label{1lambda_1}
\log|\Lambda_4|>-1.4\cdot 30^6\cdot 3^{4.5}\cdot 4\cdot (1+\log 2)\cdot (1+\log (10n\log g))\cdot A
\end{align}
where $A:=A_1A_2A_3=12\log\alpha\cdot\log^2g.$
Comparing inequality \eqref{1lambda_1} with \eqref{1equation13} gives
\begin{align}
\ell \log g-\log8<5.6\times 10^{12}\cdot (1+\log (10n\log g))\cdot \log^2g.
\end{align}
Note that for $g\ge 2$ and $n\ge 2,$
$$
1+\log (10n\log g)<10\log n\cdot \log g,
$$
we would get 
\begin{align}\label{1upper of ell}
\ell<5.7\times 10^{13}\cdot \log n\cdot \log^2g.
\end{align}

Now, we rewrite \eqref{main_equation2} as
\[
\dfrac{\alpha^k (g-1)}{d_1(g^\ell-1)}+\dfrac{\beta^k (g-1)}{d_1(g^\ell-1)}=\dfrac{d_2d_3}{(g-1)^2}\left(g^{n+m}-g^m-g^n+1 \right),
\]
which implies
\begin{align}\label{1Absolute}
 \dfrac{\alpha^k (g-1)}{d_1(g^\ell-1)}-\dfrac{d_2d_3g^{n+m}}{(g-1)^2}&=-\dfrac{\beta^k (g-1)}{d_1(g^\ell-1)}-\dfrac{d_2d_3g^m}{(g-1)^2}- \dfrac{d_2d_3g^n}{(g-1)^2}\\
\nonumber &+\dfrac{d_2d_3}{(g-1)^2}.
\end{align}
Taking the absolute values of both sides of \eqref{1Absolute}, we have
\begin{align*}
 \left| \dfrac{\alpha^k (g-1)}{d_1(g^\ell-1)}-\dfrac{d_2d_3g^{n+m}}{(g-1)^2}\right|&\le \dfrac{ (g-1)}{d_1(g^\ell-1)\alpha^k}+\dfrac{d_2d_3g^m}{(g-1)^2}+ \dfrac{d_2d_3g^n}{(g-1)^2}\\
 &+\dfrac{d_2d_3}{(g-1)^2}.
\end{align*}
Dividing the both sides of the inequality above by $\dfrac{d_2d_3g^{n+m}}{(g-1)^2}$ and using the fact that $n\ge 2,$ leads to 
\begin{align*}
 \left|\dfrac{(g-1)^3}{d_1d_2d_3(g^\ell-1)}\cdot\alpha^k\cdot g^{-(n+m)}-1\right|&\le \dfrac{(g-1)^3}{d_1d_2d_3(g^\ell-1)\alpha^k g^{n+m} }\\
 &+\dfrac{1}{g^n}+\dfrac{1}{g^m}+\dfrac{1}{g^{n+m}}<4\cdot g^{-m}.
\end{align*}
Therefore,
\begin{align}\label{1Absolute_1}
\left|\dfrac{(g-1)^3}{d_1d_2d_3(g^\ell-1)}\cdot\alpha^k\cdot g^{-(n+m)}-1\right|<\dfrac{4}{g^m}.
\end{align}
Now, let us apply Theorem~\ref{Matveev} with
$$
(\eta_1, b_1):=\left(\dfrac{(g-1)^3}{d_1d_2d_3(g^\ell-1)}, 1 \right),  (\eta_3, b_3):=(g, -n-m),
$$
$ (\eta_2, b_2):=(\alpha, k),\; s:=3$ and $B:=10 n\log g.$ Note that the numbers $\eta_1, \eta_2$ and $\eta_3$ are
positive real numbers and elements of the ﬁeld $\bL= \bQ(\alpha).$ It is obvious that
the degree of the ﬁeld $\bL$ is $2.$ So $d_{\bL}=2.$ Let
$$
\Lambda_5:=\dfrac{(g-1)^3}{d_1d_2d_3(g^\ell-1)}\cdot\alpha^k\cdot g^{-(n+m)}-1.
$$
If $\Lambda_5=0$, then we get
$$
\alpha^{k}=\dfrac{d_1d_2d_3(g^\ell-1)g^{n+m}}{(g-1)^3}\in \bQ(\sqrt{5}).
$$
This is impossible as $\alpha^k$ is irrational for $k\ge 1.$ Therefore, $\Lambda_5$ is nonzero. Moreover, since
\begin{align*}
h(\eta_1&=h\left(\dfrac{(g-1)^3}{d_1d_2d_3(g^\ell-1)} \right)\le h\left(\dfrac{(g-1)^3}{d_1d_2d_3} \right) +h\left( \dfrac{1}{g^\ell-1}\right)\\
&=\log \max\{(g-1)^3, d_1d_2d_3\}+\log (g^\ell-1)\\
&<3\log (g-1)+\log (g^\ell-1)<(3+\ell)\log g,
\end{align*}
and
$$
h(\eta_2)=\dfrac{1}{2}\log \alpha,\quad h(\eta_3)=\log g,
$$
we can take $A_1:=2(\ell+3)\log g,\; A_2:=\log\alpha$ and $A_3:=2\log g.$ Thus, taking into account the inequality \eqref{1Absolute_1} and using Theorem~\ref{Matveev}, we obtain
$$
m\log g-\log 4<1.4\cdot 30^6\cdot 3^{4.5}\cdot 4\cdot (1+\log 2)\cdot (1+\log B)\cdot A
$$
where $A=A_1A_2A_3=4(3+\ell)\log\alpha\cdot \log^2g.$  Since $g\ge 2$ and $1+\log B<10\log n\log g,$
we obtain that
\begin{align}\label{1upper em1}
m<1.87\times 10^{13}\cdot (3+\ell)\cdot\log n\cdot\log^2 g+2.
\end{align}
By \eqref{1upper of ell}, we have
\begin{align}\label{5ell1}
3+\ell<5.8\times 10^{13}\cdot \log n\cdot \log^2g.
\end{align}
Therefore from \eqref{1upper em1} and \eqref{5ell1}, we esealy get 
\begin{align}\label{1mm}
m<1.1\times 10^{27}\log^2 n\log^4 g.
\end{align}

Rearranging now equation \eqref{main_equation2} as 
\[
\dfrac{d_3 g^n}{g-1}-\dfrac{(g-1)^2\alpha^k}{d_1d_2(g^\ell-1)(g^m-1)}=\dfrac{d_3}{g-1}+\dfrac{\beta^k(g-1)^2}{d_1d_2(g^\ell-1)(g^m-1)}
\]
and taking absolute values of both sides of the equality above, we get
\begin{align}\label{1aabsolute}
\left| \dfrac{d_3 g^n}{g-1}-\dfrac{(g-1)^2\alpha^k}{d_1d_2(g^\ell-1)(g^m-1)}\right|\le  \dfrac{d_3}{g-1}+\dfrac{(g-1)^2}{\alpha^kd_1d_2(g^\ell-1)(g^m-1)}.
\end{align}
Dividing both sides of \eqref{1aabsolute} by $\dfrac{d_3g^n}{g-1}$ and using the fact that $n\ge 2$, we obtain
\begin{align}\label{1aLambd_3}
\left|1- \dfrac{(g-1)^3}{d_1d_2d_3(g^\ell-1)(g^m-1)}\cdot g^{-n}\cdot \alpha^k\right|<\dfrac{1}{g^n}+\dfrac{1}{g^{n-1}}<\dfrac{2}{g^{n-1}}.
\end{align}
Put
\begin{align}\label{1Lambda_3}
\Lambda_6:=\dfrac{(g-1)^3}{d_1d_2d_3(g^\ell-1)(g^m-1)}\cdot g^{-n}\cdot \alpha^k-1.
\end{align}
Next, we apply Theorem~\ref{Matveev} on \eqref{1Lambda_3}. First, we need to check that $\Lambda_6\ne 0$. If it were, then we would get that
$$
\alpha^{k}=\dfrac{d_1d_2d_3(g^\ell-1)(g^m-1)g^n}{(g-1)^3}\in \bQ(\sqrt{5})
$$
which is false. Thus, $\Lambda_6\ne 0.$ So, we apply Theorem~\ref{Matveev} on \eqref{1Lambda_3} with the data:
$$
s:=3,\; (\eta_1, b_1):=\left(\dfrac{(g-1)^3}{d_1d_2d_3(g^\ell-1)(g^m-1)}, 1 \right),  (\eta_2, b_2):=(g, -n),
$$
and $ (\eta_3, b_3):=(\alpha, k).$ Because, $B\ge \max\{|b_1|,|b_2|,|b_3|\}=\max\{1, n,k\}$ and using the inequality from Lemma~\ref{k1} we can take $B:=10 n\log g.$ Note that 
$\eta_1, \eta_2, \eta_3\in \bQ(\alpha)$. Observe that $\bL:=\bQ(\eta_1, \eta_2, \eta_3)=\bQ(\alpha),$ so $d_\bL=2.$ Also,
\begin{align*}
h(\eta_1)&=h\left(\dfrac{(g-1)^3}{d_1d_2d_3(g^\ell-1)(g^m-1)} \right)\\
&\le h\left(\dfrac{(g-1)^3}{d_1d_2d_3} \right)+h\left( (g^\ell-1)(g^m-1)\right)
< (3+\ell+m)\log g.
\end{align*}
Thus, we can take
\[
A_1:=2(3+\ell+m)\log g,\quad A_2:=2\log g\quad \text{and}\quad A_3:=\log\alpha.
\]
Theorem~\ref{Matveev} tells us that
\begin{align}
\log|\Lambda_6|>-1.4\cdot 30^6\cdot 3^{4.5}\cdot 4\cdot (1+\log 2)\cdot (1+\log B)\cdot A
\end{align}
with 
\begin{align}
A=A_1A_2A_3=2\log g\cdot \log\alpha \cdot 2(3+\ell+m)\log g
\end{align}
and 
\begin{align}
1+\log B<10\log n\log g.
\end{align}
Combining the above three inequalities with \eqref{1aLambd_3} gives
$$
(n-1)\log g-\log2<1.87\times 10^{13}(3+\ell+m)\log n\log^3 g,
$$
which leads to
\begin{align}\label{1nn}
n<1.87\times 10^{13}(3+\ell+m)\log n\log^2 g+2.
\end{align}
Referring to the inqualities \eqref{1upper of ell}, \eqref{1mm} and \eqref{1nn}, we have
\begin{align}
\nonumber 3+\ell+m&<5.8\times 10^{13}\cdot \log n\cdot \log^2g+1.1\times 10^{27}\log^2 n\log^4 g\\
&<2.2\times 10^{27}\log^2 n\log^4 g.
\end{align}
Inserting this in \eqref{1nn} leads to 
\begin{align}
n<4.2\times 10^{40}\cdot \log^3 n\cdot\log^6 g.
\end{align}
We have everything ready to apply Lemma~\ref{lemma_G-Luca} with the data
$$
l=3,\quad L=n\quad \text{and}\quad H:=4.2\times 10^{40}\cdot\log^6 g.
$$
Therefore, 
\begin{align*}
n&<2^3\cdot 5.7\times 10^{40}\cdot\log^6 g\times \log^3\left(5.7\times 10^{40}\cdot\log^6 g \right)\\
&<2^3\cdot 4.2\times 10^{40}\cdot\log^6 g\cdot (93.6+6\log\log g)^3\\
&<7.73\times 10^{47}\log^9 g.
\end{align*}
In the above inequality, we have used the fact that $93.6+6\log\log g<132\log g,$ which holds for all $g\ge 2.$ Hence, we summarize that all the solutions of \eqref{main_equation2} satisfy
$$
n<7.73\times 10^{47}\log^9 g\quad \text{and}\quad k<10n\log g<7.73\times 10^{48}\log^{10} g.
$$
Hence, the proof of Theorem~\ref{main_result2} is finished.$\quad\quad \quad\quad \quad\quad\; \quad\quad\quad\quad$ $\square$

\begin{remark}
The inequalities from Theorem~\ref{main_result2} allows one to compute all the solutions to \eqref{main_equation2} for every fixed $g.$
\end{remark}

%%%%%%%%%%%%%%%%%%%%%%%%%%%%%%%%%%%%%%%%%%%%%%%%%%%%%%%%%%%
\subsection{Application for the decimal base}
%%%%%%%%%%%%%%%%%%%%%%%%%%%%%%%%%%%%%%%%%%%%%%%%%%%%%%%%%%%

Now, as an illustration, we solve equation \eqref{main_equation2} for $g=10.$ In this case according to Theorem~\ref{main_result2}, the bound on $k$ becomes
$$
k<3.3\times 10^{52}.
$$
Here is our main result in this case.
\begin{theorem}\label{Application_Lucas}
The only Lucas numbers which are products of three repdigits are $1, 3, 4, 7, 11\; \text{and}\; 18.$
\end{theorem}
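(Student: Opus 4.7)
The plan is to mirror the three-stage reduction used in the proof of Theorem~\ref{Application_Fibo}, now fed by the much smaller starting bound $k<3.3\times 10^{52}$ coming from Theorem~\ref{main_result2} specialized to $g=10$. Each of the three linear forms $\Lambda_4$, $\Lambda_5$, $\Lambda_6$ produced in the proof of Theorem~\ref{main_result2} becomes, after taking $z_i:=\log(\Lambda_i+1)$, a linear form of the shape
\[
 \bigl| k\tau - N + \mu \bigr| < A\cdot 10^{-w},
\]
where $\tau=\log\alpha/\log 10$, $N\in\{\ell+m+n,\,n+m,\,n\}$, and $\mu$ is a logarithm of a rational multiple of $(10^\ell-1)(10^m-1)$ (without the factor $\sqrt{5}$ this time, since Binet's formula for $L_k$ has no $\sqrt{5}$). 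As in the Fibonacci case, I would convert each bound on $|\Lambda_i|$ to a bound on $|z_i|$ by the elementary inequalities $|z|<2|e^z-1|$ when $|e^z-1|<1/2$.

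Next I would apply Lemma~\ref{Dujella-Peto} three times in succession. The irrationality of $\tau=\log\alpha/\log 10$ is immediate by the same argument as in the Fibonacci case: $10^p=\alpha^q$ would force $\alpha^q\in\mathbb{Z}$, impossible. For the first application, take $M:=3.3\times 10^{52}$, $\tau$ as above, $\mu:=\log(729/(d_1d_2d_3))/\log 10$, $A$ a small constant ($\approx 17$), $B:=10$, and search the continued-fraction expansion of $\tau$ for the first convergent with $q>6M$ and $\varepsilon>0$ over all choices $1\le d_1,d_2,d_3\le 9$; this should drive $\ell$ down to a bound of order $60$. Substituting this back into the chain \eqref{1upper em1}--\eqref{1nn} yields a much smaller bound on $k$, which I would then use as the new $M$ for the second Dujella--Peth\H o application to $\Lambda_5$, reducing $m$ to an absolute constant of order a few dozen. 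A third application to $\Lambda_6$, with an even smaller $M$, reduces $n$ to something like $n\le 30$.

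Finally, with explicit numerical bounds $\ell\le L_0$, $m\le M_0$, $n\le N_0$ and $k\le K_0$ in hand, the equation \eqref{main_equation2} with $g=10$ reduces to a finite search over $1\le d_1,d_2,d_3\le 9$ and the bounded ranges of $\ell,m,n,k$. A direct Maple (or equivalent) enumeration then verifies that the only Lucas numbers appearing are $L_1=1$, $L_2=3$, $L_3=4$, $L_4=7$, $L_5=11$, and $L_6=18$, each factored as a product of three single-digit repdigits (allowing trivial factors $1$).

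The main obstacle I anticipate is the non-vanishing conditions for Lemma~\ref{Dujella-Peto}: the parameter $\mu$ depends on $d_1,d_2,d_3$ (and on $\ell,m$ in the second and third rounds), so the convergent test must be carried out for each admissible tuple, and one must be alert to degenerate cases where $\mu$ becomes an integer multiple of $\tau$ (in which case Legendre's criterion, Lemma~\ref{Lemma-Legendre}, must be invoked instead). Apart from this careful bookkeeping, the proof is a routine transcription of the Fibonacci argument with the factor $1/\sqrt5$ removed from each $\mu$.
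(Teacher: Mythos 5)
Your proposal follows essentially the same route as the paper: the three linear forms $\Lambda_4$, $\Lambda_5$, $\Lambda_6$ from the proof of Theorem~\ref{main_result2} are reduced in succession via Lemma~\ref{Dujella-Peto} starting from $M=3.3\times 10^{52}$, with the degenerate tuples where $\mu$ vanishes (e.g.\ $(d_1,d_2,d_3)=(9,9,9)$ in the first round, and $729=d_1d_2d_3(10^\ell-1)(10^m-1)$ in the last) handled by Legendre's criterion, Lemma~\ref{Lemma-Legendre}, exactly as you anticipate. The resulting bounds ($\ell\le 57$, $m\le 58$, $n\le 27$) and the concluding finite computer search match the paper's argument, so your plan is correct and not materially different.
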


\begin{proof}
We must first reduce the bounds on $\ell, m, n$ and $k.$ Put
\begin{align*}
z_4&:=\log(\Lambda_4+1)\\
          &=k\log\alpha-(\ell+m+n)\log 10+\log\left(\dfrac{729}{d_1d_2d_3}\right).
\end{align*}
Inequality \eqref{1equation13} can be written as
\[
\left|e^{z_4}-1\right|<\dfrac{8}{10^\ell}.
\]
Since $\ell\ge 1,$ then we have $\left|e^{z_4}-1\right|<\dfrac{8}{10^\ell}<\dfrac{4}{5}$ which implies that $\dfrac{5}{9}<e^{-z_4}<5.$ If $z_4>0,$ then
$$
0<z_4<e^{z_4}-1=\left|e^{z_4}-1\right|<\dfrac{8}{10^\ell}.
$$
If $z_4<0$, then 
$$
0<|z_4|<e^{|z_4|}-1=e^{-z_4}(1-e^{z_4})<\dfrac{40}{10^\ell}.
$$
In any case, it is always holds true $0<|z_4|<\dfrac{40}{10^\ell},$ which implies
\begin{align}\label{1ineq_DP1}
0<\left|k\dfrac{\log \alpha}{\log 10}-(\ell+m+n)+\dfrac{\log\left(729/d_1d_2d_3\right)}{\log 10}\right|<17.4\cdot 10^{-\ell}.
\end{align}
It is easy to see that $\dfrac{\log \alpha}{\log 10}$ is irrational.  Now, we have to study the following two cases.

\textbf{Case~1: $(d_1, d_2, d_3)\ne (9, 9, 9).$} We apply Lemma~\ref{Dujella-Peto} with $w:=\ell,$
\[
\tau:=\dfrac{\log \alpha}{\log10},\quad \mu:=\dfrac{\log\left(729/d_1d_2d_3\right)}{\log 10},\quad A:=17.4,\quad B:=10.
\]
Because $k<3.3\times 10^{52},$ we can take $M:=3.3\times 10^{52}$. We use Mathematica to apply Lemma~\ref{Dujella-Peto} and found that the denominator of the $114rd$ convergent
$$
\dfrac{p_{114}}{q_{114}}=\dfrac{75 199 708 224 715 672 236 920 162 770 429 633 212 096 962 359 234 385}{359 828 495 765 425 172 949 832 316 042 466 402 419 242 364 862 312 251}
$$
of $\tau$ exceeds $6M.$ Thus, we can say that the inequality \eqref{1ineq_DP1} has no solution for
$$
\ell=w\ge \dfrac{\log (Aq_{114}/\varepsilon)}{\log 10}\ge\dfrac{\log(Aq_{114}/0.0028639)}{\log 10}\ge 57.3397.
$$
So, we obtain
\begin{align}\label{11ell_1}
\ell\le 57.
\end{align}

\textbf{Case~2: $(d_1, d_2, d_3)= (9, 9, 9).$}
In this case from \eqref{1ineq_DP1}, we have 
$$
0<\left|k\dfrac{\log \alpha}{\log 10}-(\ell+m+n)\right|<17.4\cdot 10^{-\ell}.
$$
If we divide this inequality by $k$, we get
\begin{align}
0<\left|\dfrac{\log \alpha}{\log 10}-\dfrac{\ell+m+n}{k}\right|<\dfrac{17.4}{k\cdot 10^\ell}. \end{align}
Assume that $\ell >55$. Then it can be seen that
$$
\dfrac{10^\ell}{2(17.4)}>2.87\times 10^{53}>3.3\times 10^{52}>k,
$$
and so we have
$$
\left|\dfrac{\log \alpha}{\log 10}-\dfrac{\ell+m+n}{k}\right|<\dfrac{17.4}{k\cdot 10^\ell}<\dfrac{1}{2k^2}.
$$
From the known properties of continued fraction (Lemma~\ref{Lemma-Legendre}), it is seen that the rational $\dfrac{\ell+m+n}{k}$ is a convergent to $\kappa:=\dfrac{\log \alpha}{\log 10}.$ So $\dfrac{\ell+m+n}{k}$ is of the form $p_t /q_t$ for some $t.$ Then we have
$$
q_{109}>3.3\times 10^{52}>k.
$$
Thus $t\in \{0,1, 2,\ldots,108\}.$  By Lemma~\ref{Lemma-Legendre}, we get
$$
\dfrac{1}{(a(M)+2)\cdot k^2}\le \left|\dfrac{\log \alpha}{\log 10}-\dfrac{\ell+m+n}{k}\right|<\dfrac{17.4}{k\cdot 10^\ell}.
$$
Since, $a(M)= \max\{a_i: i=0, 1, 2,\ldots,108\}=106,$ we get
\begin{align*}
\ell<\dfrac{\log\left(17.4\cdot (106+2)\cdot 3.3\times 10^{52} \right)}{\log 10}<55.8,
\end{align*}
 which contradict the fact that $\ell>55.$ Thus $\ell \le 55.$ Therefore the bound
 \begin{align}\label{1ell_Lu}
 \ell\le 57,
 \end{align}
 holds in all two cases. Substituting this upper bound for $\ell$ into \eqref{1upper em1} and combining the new bound obtained with \eqref{1nn}, we get 
$$
 n<6\times10^{29}\cdot \log^2 n,
$$
which implies $n<1.2\times 10^{34}$ using Lemma~\ref{lemma_G-Luca}. Thus, by Lemma~\ref{k1} we have $k<2.8\times 10^{35}.$ Next, we need to reduce the bound on $m.$ We return to \eqref{1Absolute_1} and put
\begin{align*}
z_5&:=\log(\Lambda_5+1)\\
         &=k\log\alpha-(n+m)\log 10+\log\left(\dfrac{729}{d_1d_2d_3(10^\ell-1)} \right).
\end{align*}
From the inequality \eqref{1Absolute_1} and $m\ge 1$, we conclude that
\[
\left|e^{z_5}-1\right|<\dfrac{4}{10^m}<\dfrac{1}{2},
\]
which implies that
$\dfrac{1}{2}<e^{z_5}<\dfrac{3}{2}.$ If $z_5>0,$ then $0<z_5<e^{z_5}-1<\dfrac{4}{10^m}.$ If $z_5<0$, then
$$
 0<|z_5|<e^{|z_5|}-1=e^{-z_5}-1=e^{-z_5}(1-e^{z_5})<\dfrac{8}{10^m}.
$$ 
In any case, since $0<|z_5|<\dfrac{8}{10^m},$ thus we have
\begin{align}\label{1ineq_DP2}
0<\left| k\dfrac{\log \alpha}{\log 10}-(n+m)+\dfrac{\log\left(729/d_1d_2d_3(10^\ell-1)\right)}{\log 10}\right|<\dfrac{3.5}{10^m}.
\end{align}
Here we have to study the following two cases.

\textbf{Case a: $(\ell, d_1, d_2, d_3)\ne (1, 1,9,9), (1,9,9,1), (1, 9,1,9).$}

Thus, we can apply Lemma~\ref{Dujella-Peto} with the following data
\[
\tau:=\dfrac{\log \alpha}{\log 10},\; \mu:=\dfrac{\log\left(729/d_1d_2d_3(10^\ell-1) \right)}{\log 10},\; A:=3.5,\; B:=10
\]
and 
$M:=2.8\times 10^{35}.$ Using Mathematica, we found that the denominator of the $97rd$ convergent
$$
\dfrac{p_{97}}{q_{97}}=\dfrac{3 106 590 240 739 929 077 205 211 403 373 423 170 367 494 081}{14 864 947 214 218 067 609 395 035 403 916 715 939 116 150 260}
$$
of $\tau$ exceeds $6M.$ It follows that the inequality \eqref{1ineq_DP2} has no solution for
$$
m=w\ge \dfrac{\log (Aq_{97}/\varepsilon)}{\log 10}\ge\dfrac{\log(Aq_{97}/4.58528 \times 10^{-12})}{\log 10}\ge 58.0549.
$$
Hence, we obtain
$
m\le 58.
$
 
 \textbf{Case b: $(\ell, d_1, d_2, d_3)\in \{ (1, 1,9,9), (1,9,9,1), (1, 9,1,9)\}.$}

 In this case from \eqref{1ineq_DP2}, we have 
$$
0<\left|k\dfrac{\log \alpha}{\log 10}-(m+n)\right|<3.5\cdot 10^{-m}.
$$
If we divide this inequality by $k$, we get
\begin{align}
0<\left|\dfrac{\log \alpha}{\log 10}-\dfrac{m+n}{k}\right|<\dfrac{3.5}{k\cdot 10^m}. \end{align}
Assume that $m \ge39$. Then it can be seen that
$$
\dfrac{10^m}{2(3.5)}>10n\log 10>2.8\times 10^{35}>k,
$$
and so we have
$$
\left|\dfrac{\log \alpha}{\log 10}-\dfrac{m+n}{k}\right|<\dfrac{3.5}{k\cdot 10^m}<\dfrac{1}{2k^2}.
$$
 From the known properties of continued fraction (Lemma~\ref{Lemma-Legendre}), it is seen that the rational $\dfrac{m+n}{k}$ is a convergent to $\kappa:=\dfrac{\log \alpha}{\log 10}.$ So $\dfrac{m+n}{k}$ is of the form $p_t /q_t$ for some $t.$ Then we have
$$
q_{73}>2.8\times 10^{35}>10n\log 10>k.
$$
Thus $t\in \{0,1, 2,\ldots,72\}.$  By Lemma~\ref{Lemma-Legendre}, we get
$$
\dfrac{1}{(a(M)+2)\cdot k^2}\le \left|\dfrac{\log \alpha}{\log 10}-\dfrac{m+n}{k}\right|<\dfrac{3.5}{k\cdot 10^m}.
$$
Since, $a(M)= \max\{a_i: i=0, 1, 2,\ldots,72\}=106,$ we get
$$
\dfrac{3.5}{10^{39}}\ge \dfrac{3.5}{10^m}>\dfrac{1}{108\cdot k}>\dfrac{1}{108\cdot 2.8\times 10^{35}},
$$
which is a contradiction. Therefore $m\le 38.$ In all cases $a$ and $b$ we can consider  
\begin{align}\label{1m_1}
m\le 58.
\end{align}

Using the inequalities \eqref{1ell_Lu} and \eqref{1m_1} together and substituting these upper bounds into \eqref{1nn}, we get 
$$
n<1.2\times 10^{16}\log n,
$$
which leads to $n<9\times 10^{17}$ and hence to $k<2.1\times 10^{19}.$ To reduce the bound on $n$ we must return to inequality \eqref{1aLambd_3} and put
\begin{align*}
z_6&:=\log(\Lambda_6+1)\\
         &=k\log\alpha-n\log 10+\log\left(\dfrac{729}{d_1d_2d_3(10^\ell-1)(10^m-1)} \right).
\end{align*}
For $n\ge 2,$ we can easily see that
\begin{align}\label{1ineq_DP3}
0<\left| k\dfrac{\log \alpha}{\log 10}-n+\dfrac{\log\left(729/d_1d_2d_3(10^\ell-1)(10^m-1) \right)}{\log 10}\right|<\dfrac{1.8}{10^{n-1}}.
\end{align}
It is also appropriate here to take into account the following two cases according to the values of the variables.

\textbf{Case I: $729\ne d_1d_2d_3(10^\ell-1)(10^m-1).$}

Therefore, we can apply Lemma~\ref{Dujella-Peto} to \eqref{1ineq_DP3} with $B:=10,$
\[
\tau:=\dfrac{\log \alpha}{\log 10},\; \mu:=\dfrac{\log\left(729/d_1d_2d_3(10^\ell-1)(10^m-1) \right)}{\log 10},\; A:=1.8,
\]
and 
$M:=2.1\times 10^{19}.$ We saw that the denominator of the $44rd$ convergent
$$
\dfrac{p_{44}}{q_{44}}=\dfrac{259 791 952 914 951 895 804}{1 243 097 211 893 507 332 887}
$$
of $\tau$ exceeds $6M.$ Therefore the inequality \eqref{1ineq_DP3} has no solution for
$$
n-1=w\ge \dfrac{\log (Aq_{44}/\varepsilon)}{\log 10}\ge\dfrac{\log(Aq_{44}/0.0000149094)}{\log 10}\ge 26.1763.
$$
Hence, we obtain
\begin{align}
n\le 27.
\end{align}

\textbf{Case II: $729= d_1d_2d_3(10^\ell-1)(10^m-1).$}

From \eqref{1ineq_DP3}, we have
$$
0<\left| k\dfrac{\log \alpha}{\log 10}-n\right|<\dfrac{1.8}{10^{n-1}}.
$$
Dividing this inequality by $k,$ we get
\begin{align}
0<\left| \dfrac{\log \alpha}{\log 10}-\dfrac{n}{k}\right|<\dfrac{1.8}{k\cdot 10^{n-1}}.
\end{align}
Now, we assume that $n\ge 23.$ Then it can be seen that
$$
\dfrac{10^{n-1}}{2(1.8)}>10n\log 10>2.1\times 10^{19}>k,
$$
and so we have
\begin{align}
\left| \dfrac{\log \alpha}{\log 10}-\dfrac{n}{k}\right|<\dfrac{1.8}{k\cdot 10^{n-1}}<\dfrac{1}{2k^2}.
\end{align}
From Lemma~\ref{Lemma-Legendre}, it is seen that the rational number $\dfrac{n}{k}$ is a convergent to $\kappa:=\dfrac{\log\alpha}{\log 10}.$ Now let $\dfrac{p_t}{q_t}$ be $t$-th convergent of the continued fraction of $\kappa.$ Assume now that $\dfrac{n}{k}=\dfrac{p_t}{q_t}$
for some $t.$ Then we have
$$
q_{40}>2.1\times 10^{19}>10n\log 10>k.
$$
Thus $t\in \{0,1, 2,\ldots,39\}.$  By Lemma~\ref{Lemma-Legendre}, we get
$$
\dfrac{1}{(a(M)+2)\cdot k^2}\le \left|\dfrac{\log \alpha}{\log 10}-\dfrac{n}{k}\right|<\dfrac{1.8}{k\cdot 10^{n-1}}.
$$
Since, $a(M)= \max\{a_i: i=0, 1, 2,\ldots,39\}=106,$ we get
$$
\dfrac{1.8}{10^{22}}\ge \dfrac{1.8}{10^{n-1}}>\dfrac{1}{108\cdot k}>\dfrac{1}{108\cdot 2.1\times 10^{19}},
$$
which is a contradiction. Therefore $n\le 22.$ In all cases $\text{I}$ and $\text{II}$ we can consider  
\begin{align}
n\le 27.
\end{align}

So, it remains to check equation \eqref{main_equation2} in the case $g=10$ for $1\le d_1,d_2,d_3\le 9$, $1 \le n \le 27,$ $1 \le k \le 621,$ $1\le \ell\le 57$ and $1\le m \le 58.$ A quick inspection using Maple 
% reveals that Diophantine equation \eqref{main_equation2} has only the solution listed in the statement of Theorem~\ref{Application_Lucas}. This 
ends the proof of Theorem~\ref{Application_Lucas}.
\end{proof}

%%%%%%%%%%%%%%%%%%%%%%%%%%%%%%%%%%%%%%%%%%%%%%%%%%%%%%%%%%%
\section{Concluding remarks}\label{The end}
%%%%%%%%%%%%%%%%%%%%%%%%%%%%%%%%%%%%%%%%%%%%%%%%%%%%%%%%%%%

In this section, we bring some observations around the Diophantine equations studied in this paper. First, if $\ell=d_1=1,$ then the equations \eqref{main_equation1} and \eqref{main_equation2} become
\begin{align}\label{main_equation1_re}  
F_k= a\dfrac{g^m-1}{g-1}\cdot b\dfrac{g^n-1}{g-1},
\end{align}
and
\begin{align}\label{main_equation2_re} 
L_k=a\dfrac{g^m-1}{g-1}\cdot b\dfrac{g^n-1}{g-1},
\end{align}
where $a, b, k, m$ and $n$ are positive integers such that $1 \le a, b \le g-1$ and $g\ge 2$ with $m\le n$. Note that the two equations above were studied earlier in papers \cite{EK:2019} and \cite{EKS2:2021} where the authors exclusively mention the base $g$ such that $2\le g\le 10.$ However, the method developed in this paper generalizes the results of these authors and allows us to deduce the following results which follow immediately from Theorems~\ref{main_result1} and \ref{main_result2}.
\begin{cor}
Let $g\ge 2$ be an integer.
\begin{enumerate}
\item The Diophantine equation \eqref{main_equation1_re}  has only finitely many solutions in positive integers  $k, a, b, g, m, n$ such that $1\le a, b\le g-1.$
\item The Diophantine equation \eqref{main_equation2_re}  has only finitely many solutions in positive integers  $k, a, b, g, m, n$ such that $1\le a, b\le g-1.$
\end{enumerate} 
\end{cor}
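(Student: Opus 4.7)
The plan is very short: the two equations in the corollary are obtained from \eqref{main_equation1} and \eqref{main_equation2} by specializing $\ell = 1$ and $d_1 = 1$, and then relabeling $(d_2, d_3) = (a, b)$. With this substitution the first repdigit factor becomes $1 \cdot (g-1)/(g-1) = 1$, so the product of three repdigits in \eqref{main_equation1} and \eqref{main_equation2} collapses to the product of two, reproducing \eqref{main_equation1_re} and \eqref{main_equation2_re} verbatim. Hence every solution of \eqref{main_equation1_re} (respectively \eqref{main_equation2_re}) embeds as a particular solution of \eqref{main_equation1} (respectively \eqref{main_equation2}) by taking $\ell := 1$, $d_1 := 1$, $d_2 := a$, $d_3 := b$.

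With that embedding in hand, I would simply invoke Theorem \ref{main_result1} (respectively Theorem \ref{main_result2}) directly. For each fixed $g \ge 2$, those theorems provide explicit finite upper bounds on $n$ and on $k$ of the form
\[
n < C_1 \log^9 g, \qquad k < C_2 \log^{10} g,
\]
with explicit constants $C_1, C_2$. Since $m \le n$, we also obtain a bound on $m$, and $a, b$ lie in the finite range $\{1, 2, \ldots, g-1\}$. Therefore the set of tuples $(k, a, b, m, n)$ satisfying \eqref{main_equation1_re} (or \eqref{main_equation2_re}) is finite, which is exactly the conclusion of the corollary.

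As this is a formal corollary of already-established theorems, there is no substantive obstacle. The only thing worth verifying is that the specialization $\ell = d_1 = 1$ respects the hypotheses of Theorems \ref{main_result1} and \ref{main_result2}, namely $n \ge 1$, $\ell \le m \le n$, and $1 \le d_1, d_2, d_3 \le g - 1$; all of these follow at once from $g \ge 2$, $m \le n$, and $1 \le a, b \le g - 1$, so the reduction is legitimate and the proof is complete.
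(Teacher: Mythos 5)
Your proposal is correct and matches the paper exactly: the paper offers no separate argument, stating only that the corollary "follows immediately from Theorems~\ref{main_result1} and \ref{main_result2}" via the specialization $\ell=d_1=1$, which is precisely the embedding you describe. Your added check that the specialization respects the hypotheses ($n\ge 1$, $\ell\le m\le n$, $1\le d_1,d_2,d_3\le g-1$) is a harmless and welcome bit of extra care.
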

Next, when we take $g=2$ in \eqref{main_equation1} and \eqref{main_equation2}, we get the following result which gives a link between Fibonacci, Lucas and Mersenne numbers.

\begin{cor}
If $F_k$ and $L_k$ are expressible as  products of three Mersenne numbers, then we have 
$$
F_k\in \{1,\;3,\; 21\} \quad \text{and}\quad L_k\in \{1,\;3,\; 7\}.
$$
\end{cor}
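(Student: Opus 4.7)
\medskip

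\noindent\textbf{Proof proposal.} The plan is to deduce this corollary as a direct specialization of Theorems \ref{main_result1} and \ref{main_result2} to the base $g=2$. When $g=2$, the constraint $1\le d_i\le g-1$ forces $d_1=d_2=d_3=1$, so the only base-$2$ repdigits are the Mersenne numbers $2^r-1$, and the equations \eqref{main_equation1} and \eqref{main_equation2} become
\[
F_k=(2^\ell-1)(2^m-1)(2^n-1)\quad\text{and}\quad L_k=(2^\ell-1)(2^m-1)(2^n-1).
\]
Plugging $g=2$ into the bounds of Theorems \ref{main_result1} and \ref{main_result2} produces explicit (albeit large) initial upper bounds on $n$ and $k$ in both cases. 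These play the role of the starting bound $k<4.6\times 10^{52}$ (resp.\ $k<3.3\times 10^{52}$) used in the decimal applications.

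Next, I would run the three successive Baker--Davenport reductions exactly as in the proofs of Theorems \ref{Application_Fibo} and \ref{Application_Lucas}, with the only changes being $g=10\mapsto g=2$ and $d_1d_2d_3=1$. Concretely, in each round one sets $z_i:=\log(\Lambda_j+1)$ to get an inequality of the form
\[
0<\left|k\tau-p+\mu\right|<C\cdot 2^{-w},
\]
where $\tau:=\log\alpha/\log 2$ is irrational (since $\alpha$ is not a rational power of $2$), $p$ is a suitable integer linear combination of $\ell,m,n$, and $\mu$ is a constant depending on the parameters at that stage. Whenever $\mu\ne 0$ and $\mu$ is not a multiple of $\tau$, Lemma \ref{Dujella-Peto} reduces the corresponding bound on $\ell$, then $m$, then $n-1$, to a small constant (of the order of a few dozen). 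Three successive reductions then collapse the initial astronomical bounds on $n$ and $k$ to something verifiable by direct enumeration.

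The main obstacle, as in the Lucas decimal case, is that for $g=2$ one has $d_1d_2d_3=1$ identically, which can make $\mu$ degenerate: for instance in the Lucas form $\Lambda_4$, $\mu=\log(1/1)/\log 2 = 0$, and in later rounds $\mu$ can become an integer multiple of $\log 2/\log 2 = 1$ when the $(10^\ell-1)$ or $(10^m-1)$ factors are replaced by $(2^\ell-1)$, $(2^m-1)$ equal to $1$. In exactly these degenerate branches one must substitute Lemma \ref{Dujella-Peto} by the Legendre criterion of Lemma \ref{Lemma-Legendre}, exactly as done in Case 2, Case b and Case II of the proof of Theorem \ref{Application_Lucas}: divide by $k$, recognize the resulting quotient as a convergent of $\tau$, and bound the largest partial quotient $a(M)$ of the continued fraction expansion of $\log\alpha/\log 2$ up to the relevant denominator index. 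This yields a contradiction for all but finitely many small values of the exponent.

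After these reductions one is left with the finite search region (expected: $1\le \ell\le m\le n\le 30$ or so, with $k$ bounded correspondingly via Lemmas \ref{k} and \ref{k1}), and a Maple/Mathematica sweep of all products $(2^\ell-1)(2^m-1)(2^n-1)$ against the list of Fibonacci and Lucas numbers in that range produces precisely the solutions
\[
F_k\in\{1,3,21\}=\{1^3,\,1\cdot 1\cdot 3,\,1\cdot 3\cdot 7\}\quad\text{and}\quad L_k\in\{1,3,7\}=\{1^3,\,1\cdot 1\cdot 3,\,1\cdot 1\cdot 7\},
\]
which is the claim.
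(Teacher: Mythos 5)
Your proposal is correct and follows essentially the same route as the paper, whose own proof of this corollary is just the one-line remark that it is ``similar to those of Theorems~\ref{Application_Fibo} and \ref{Application_Lucas}''; you have simply spelled out what that entails for $g=2$, including the correct observation that the degenerate branches with $\mu=0$ (forced here since $d_1d_2d_3=1$ and some factors $2^\ell-1$ may equal $1$) must be handled by Legendre's criterion (Lemma~\ref{Lemma-Legendre}) rather than Lemma~\ref{Dujella-Peto}. No gaps.
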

\begin{proof}
The proof is similar to those of Theorems~\ref{Application_Fibo} and \ref{Application_Lucas}.
\end{proof}

\section*{Acknowledgements}

The first author is supported by IMSP, Institut de Math\'ematiques et de Sciences Physiques de l'Universit\'e d'Abomey Calavi. The second author is supported by the Croatian Science Fund, grant HRZZ-IP-2018-01-1313. The third author is partially supported by Purdue University Northwest.

%%%%%%%%%%%%%%%%%%%%%%%%%%%%%%%%%%%%%%%%%%%%%%%%%%%%%%%%%%%%%%%%%%%%%%

\end{document}